\numberwithin{equation}{section}
\newtheorem{theorem}{Theorem}[section] 
\newtheorem{definition}[theorem]{Definition}
\newtheorem{remark}[theorem]{Remark}
\newtheorem{assumption}[theorem]{Assumption}
\newtheorem{lemma}[theorem]{Lemma}
\newtheorem{proposition}[theorem]{Proposition}
\newtheorem{example}[theorem]{Example}
\title{Error analysis for probabilities of rare events with approximate models}
\author{F. Wagner, J. Latz, I. Papaioannou, E. Ullmann}
\date{\today}
\begin{document}

\maketitle

\begin{abstract}
The estimation of the probability of rare events is an important task in reliability and risk assessment. We consider failure events that are expressed in terms of a limit-state function, which depends on the solution of a partial differential equation (PDE). In many applications, the PDE cannot be solved analytically. We can only evaluate an approximation of the exact PDE solution. Therefore, the probability of rare events is estimated with respect to an approximation of the limit-state function. This leads to an approximation error in the estimate of the probability of rare events. 
Indeed, we prove an error bound for the approximation error of the probability of failure, which behaves like the discretization accuracy of the PDE multiplied by an approximation of the probability of failure, the first order reliability method (FORM) estimate. This bound requires convexity of the failure domain. For non-convex failure domains, we prove an error bound for the relative error of the FORM estimate. Hence, we derive a relationship between the required accuracy of the probability of rare events estimate and the PDE discretization level. This relationship can be used to guide practicable reliability analyses and, for instance, multilevel methods.
\vspace{0.5cm}
\\\textbf{Keywords:} Uncertainty quantification, stochastic finite elements, error analysis, reliability analysis
\end{abstract}

\section{Introduction}\label{Sec: Introduction}
The distinction of safe and failure events is a crucial topic in reliability analysis and risk management. The occurrence of failure events cannot always be avoided; therefore, the estimation of the probability of such occurrences is of high significance. Indeed, failure probabilities are usually small; hence, the denomination \emph{probability of rare events} is commonly used. The failure event can be defined in terms of a \emph{limit-state function} (LSF). The LSF is a function of a set of uncertain parameters and the failure event is defined by the collection of parameter values for which the LSF takes non-positive values.
\\In this work, we consider settings where the LSF is based on the solution of an elliptic partial differential equation (PDE) with random diffusion coefficient. These situations frequently arise in engineering risk settings. For example, the authors in \cite{Cornaton08,Noseck08} consider radioactive waste repositories. Therein, the departure of radioactive particles and their travel paths through the subsurface are of high relevance. The goal is to determine the probability that radioactive particles come back to the human environment. Since the exact subsurface properties and exact travel paths of the particles are unknown, the hydraulic conductivity of the soil is modelled as a random field while the particle flow is simulated by a finite element method (FEM) approximation of the groundwater flow and transport equation.
\\The application of discretization schemes, such as Finite Differences \cite{LeVeque07}, Finite Volumes \cite{Eymard00} or FEM \cite{Braess07}, introduce a PDE discretization error in the evaluation of the LSF. Consequently, this leads to an approximation error of the probability of rare events. The accuracy of the approximation depends on the discretization size of the spatial or temporal domain. In previous years, many methods have been developed for rare event estimation, which are based on a sequence of discretization levels with increasing accuracy. Examples are Multilevel Subset Simulation \cite{Ullmann15}, Multilevel Monte Carlo \cite{Elfverson16}, Multilevel Sequential Importance Sampling \cite{Wagner20} or a multifidelity approach in \cite{Peherstorfer18}. However, only the sampling error of the methods has been considered so far. In contrast, we do not consider the sampling error and focus on the PDE discretization error.
\\There is a large amount of literature available, which derive error bounds for the PDE discretization error. However, there are few publications which consider the induced approximation error of rare event probabilities. The authors in \cite{Elfverson16} derive an upper bound for the absolute approximation error of the probability of failure which behaves as the PDE discretization error. However, the absolute error is of limited interest since failure probabilities are usually small. This manuscript closes this gap for LSFs which are based on elliptic PDEs. Indeed, the work of \cite{Elfverson16} forms the starting point for our contributions.
\\We derive an error bound for the probability of rare events which behaves as the PDE discretization error multiplied by the \emph{first order reliability method} (FORM) \cite{Hasofer74,Kiureghian04} estimate. The FORM estimate is determined by the minimum distance of the failure domain to the origin of an independent standard Gaussian input space. If the failure domain is a convex set, the FORM estimate is an upper bound for the probability of the rare event. We use this condition as an assumption for the derived error bound. Indeed, if the FORM estimate is equal to the probability of failure, our error bound gives an upper bound for the relative error. An example for this case is an LSF which is affine linear with respect to Gaussian stochastic parameters. Moreover, we provide an error bound of the relative error with respect to the FORM estimates. This bound is more generally applicable since convexity of the failure domains is not required.
\\The manuscript is structured as follows. In Section~\ref{Sec Setting}, we present the main theorem, underlying setting and relevant assumptions. The proof of the theorem is given in the subsequent sections. First, we show that the absolute error is bounded by the PDE discretization error multiplied by the local Lipschitz constant associated with the \emph{cumulative distribution function} (CDF) of the LSF. In Section~\ref{Sec linear lsf}, we show that for affine linear LSFs, the upper bound of the local Lipschitz constant depends linearly on the probability of failure. This gives an upper bound for the relative error. If the LSF is based on the solution of an elliptic PDE with stochastic diffusion coefficient, we show in Section~\ref{Sec. Opt Control} that the distance of the exact and approximate failure domains behaves as the PDE discretization error. Thereafter, we show that the Gaussian measure of the symmetric difference of the failure domains can be bounded by the Gaussian measure of an interval in 1D if the failure domains are convex. Using this fact, the LSF is linearized around the \emph{most likely failure point} (MLFP) and the results for affine linear LSFs are applied. This proves the main theorem. In Section~\ref{Sec. experiments}, we consider three numerical examples, two in low dimensions with analytical solutions, and one high-dimensional example.

\section{Problem Setting and Main Result}\label{Sec Setting}
Even though many of our results are applicable to more general settings, we focus in this work on failure events that are based on the solution of an elliptic PDE with random diffusion coefficient and Dirichlet boundary conditions. In the following subsections, we first introduce this setting more particularly. Then, we briefly describe the FORM approach to the estimation of rare event probabilities. The FORM estimate is part of the error bound that forms the main result of this work and that we summarise in Section \ref{subsection error bound}.

\subsection{Elliptic PDE, failure events, and their approximation}
Let $(\Omega, \mathcal{A}, \mathbb{P})$ be a probability space and $D\subset \mathbb{R}^d$, $d=1,2,3$, be an open, bounded, convex, polygonal domain. The given quantities are a real-valued random field $a:D\times \Omega\rightarrow \mathbb{R}$ and a real valued function $f\in L^2(D)$. We seek a random field $y:\overline{D}\times\Omega\rightarrow\mathbb{R}$, such that for $\mathbb{P}$-almost every (a.e.) $\omega\in\Omega$ it holds
\begin{align}
-\nabla_x\cdot(a(x,\omega)\nabla_x y(x,\omega)) &= f(x) \quad \forall x\in D,\label{elliptic PDE}
\\y(x,\omega) &= 0 \quad \forall x\in\partial D.\label{boundary term}
\end{align}
In practice, we employ a FEM discretization to solve~\eqref{elliptic PDE}.
Thus, we consider the weak or variational form of the PDE. A random field $y:\overline{D}\times\Omega\rightarrow\mathbb{R}$ satisfies the \emph{pathwise variational formulation}, if for a fixed $\omega\in\Omega$ it holds that $y(\cdot,\omega)\in V$ and
\begin{align}
\int_D a(x,\omega)\nabla_x y(x,\omega) \cdot \nabla_x v(x) \mathrm{d}x = \int_D f(x) v(x) \mathrm{d}x \quad\forall v\in V,\label{pathwise}
\end{align}
where $V:=H_0^1(D)$. Let $h>0$ denote a discretization parameter, typically the mesh size. We define the \emph{discretized pathwise variational formulation} for $y_h\in V_h$ as
\begin{align}
\int_D a(x,\omega)\nabla_x y_h(x,\omega) \cdot \nabla_x v_h(x) \mathrm{d}x = \int_D f(x) v_h(x) \mathrm{d}x \quad\forall v_h\in V_h.\label{pathwise discrete}
\end{align} 
Here, $V_h\subset V$ is a finite dimensional vector space. In this manuscript, we consider two types of diffusion coefficients.
\begin{definition}[Ellipticity and boundedness of the diffusion coefficient]\label{Defintion diffusion coeff}
\hspace{0.001cm}
\begin{itemize}
\item[(I)] The diffusion coefficient $a$ is uniformly elliptic and bounded if there exists $a_{\mathrm{min}}, a_{\mathrm{max}}> 0$ such that for $\mathbb{P}$-a.e. $\omega\in\Omega$
\begin{align}
0< a_{\mathrm{min}}\le a(x,\omega)\le a_{\mathrm{max}} <\infty, \quad \text{for a.e. } x\in D.\label{Def uniform}
\end{align}
\item[(II)] The diffusion coefficient $a$ is pathwise elliptic and bounded if there exists real-valued random variables $a_{\mathrm{min}}, a_{\mathrm{max}}: \Omega\rightarrow\mathbb{R}$ such that for $\mathbb{P}$-a.e. $\omega\in\Omega$
\begin{align}
0< a_{\mathrm{min}}(\omega)\le a(x,\omega)\le a_{\mathrm{max}}(\omega) <\infty, \quad \text{for a.e. } x\in D,\label{Def pathwise}
\end{align}
where $a_{\mathrm{min}}(\omega):=\underset{x\in\overline{D}}{\min}\hspace{0.2cm} a(x,\omega)$ and $a_{\mathrm{max}}(\omega):=\underset{x\in\overline{D}}{\max}\hspace{0.2cm} a(x,\omega)$.
\end{itemize}
\end{definition}

\begin{remark}\label{Remark non-uniformly elliptic}
We note that our proved error bounds in Proposition~\ref{proposition} and Theorem~\ref{Thm error bound} require that the approximation error of an observation operator acting on $y$ and $y_h$ is uniformly bounded; see Assumption~\ref{Ass LSF error}. This assumption is in general violated for diffusion coefficients which are only pathwise elliptic and bounded, i.e., they do not satisfy~\eqref{Def uniform}. Therefore, Proposition~\ref{proposition} and Theorem~\ref{Thm error bound} are only valid for uniformly elliptic and bounded diffusion coefficients. In Remark~\ref{Remark uniform} and~\ref{Remark error bound}, we will discuss in which way our derived error bounds are useful for diffusion coefficients which only satisfy~\eqref{Def pathwise}.

\end{remark}
Under Assumption~\ref{regularity of a}, existence and uniqueness of a solution for~\eqref{pathwise} and~\eqref{pathwise discrete} is ensured by \cite[Theorem 9.9]{Lord14}. Moreover, under Assumption~\ref{regularity of a} and for $d=2$, the authors of \cite[Theorem 2.1]{Teckentrup2013} show that the solution $y$ of~\eqref{pathwise} satisfies $y(\cdot,\omega)\in H^2(D)\cap H_0^1(D)$, which we require in the proof of Theorem~\ref{Thm. LSF distance}. For $d=3$, the authors state in~\cite[Remark 5.2 (c)]{Teckentrup2013} that the same property holds if $D$ is convex.

\begin{assumption}[Regularity of the diffusion coefficient]\label{regularity of a}
We assume that
\begin{itemize}
\item[(i)] the computational domain $D$ is open, bounded, convex and polygonal,
\item[(ii)] $a_{\mathrm{min}}(\omega)\ge 0$ for $\mathbb{P}$-a.e. $\omega\in\Omega$ and $1/a_{\mathrm{min}}\in L^p(\Omega)$ for all $p\in(0,\infty)$,
\item[(iii)] $a\in L^p(\Omega, C^1(\bar{D}))$ for all $p\in(0,\infty)$, i.e., the realisations $a(\cdot, \omega)$ are continuously differentiable,
\item[(iv)] $f\in L^2(D)$.
\end{itemize}
\end{assumption}
We note that Assumption~\ref{regularity of a} (ii) is automatically satisfied for uniformly elliptic and bounded diffusion coefficients. Moreover, $y(\cdot,\omega)\in H^2(D)\cap H_0^1(D)$ is still ensured if Assumption~\ref{regularity of a} (i) is replaced by requiring that $D$ is open, bounded and has a $C^2$ boundary \cite[Theorem 3.4]{Scheichl13}.

Having considered the spatial regularity of the diffusion coefficient, we specify the parametric regularity of $a$ in the following assumption. Moreover, we require that $a$ depends on a Gaussian random variable.

\begin{assumption}[Parametric form and parametric regularity of the diffusion coefficient]\label{Ass gaussian}
\hspace{0.001cm}
\begin{itemize}
\item[(i)] The diffusion coefficient $a(x,\omega)$ is a measurable function of an $n$-variate random vector $U:\Omega\rightarrow\mathbb{R}^n$, where $U$ follows the $n$-variate independent standard normal distribution. This means, there is a function $\widehat{a}: D\times\mathbb{R}^n\rightarrow\mathbb{R}$ with $a(x,\omega) = \widehat{a}(x,U(\omega))$ for $\mathbb{P}$-a.e. $\omega\in\Omega$.
\item[(ii)] The diffusion coefficient $a(x,\omega)$ is three times continuously differentiable with respect to outcomes $u\in\mathbb{R}^n$ of $U$ for all $x\in D$.
\end{itemize} 
\end{assumption}
Assumption~\ref{Ass gaussian}~(i) implies that $a(x,\omega)$ can be viewed as a function in space depending on an $n$-dimensional parameter given by the outcomes $u\in\mathbb{R}^n$ of $U$. Thus, $a$ can be viewed as finite dimensional noise \cite[Definition 9.38]{Lord14}. We note that Assumption~\ref{Ass gaussian}~(i) is not a strong restriction. Under mild assumptions, a non-Gaussian random variable $\widetilde{U}$ can be transformed via an isoprobabilistic transformation $U=T(\widetilde{U})$ to a Gaussian random variable $U$. For instance, if $\widetilde{U}$ can be modelled by a Gaussian copula, the Nataf transform \cite{Kiureghian86} can be applied to express it as a function of a standard normal random variable. If the conditional distributions of $\widetilde{U}_{k+1}$ given $\widetilde{U}_1,\dots,\widetilde{U}_{k}$ are known for $k=1,\dots,n-1$, the Rosenblatt transform can be applied \cite{Hohenbichler81}. 

Based on the elliptic PDE, we now define the LSF, the failure event, the failure probability, and their approximations.
Failure is defined in terms of an LSF $G:\mathbb{R}^n\rightarrow\mathbb{R}$ such that $G(U(\omega))\le 0$ for $\omega\in\Omega$. Furthermore, we assume that the LSF $G$ and the PDE solution $y$ are related via a linear and bounded operator $\mathcal{F}:V\rightarrow\mathbb{R}$
\begin{align}
G(U(\omega)) := y_{\mathrm{max}} - \mathcal{F} y(\cdot, \omega), \label{LSF form}
\end{align} 
where $y_{\mathrm{max}}\in\mathbb{R}$ is a constant. Analogously, we define the discretized LSF $G_h:\mathbb{R}^n\rightarrow\mathbb{R}$ as
\begin{align}
G_h(U(\omega)) := y_{\mathrm{max}} - \mathcal{F}_h y_h(\cdot,\omega),\label{LSF form discrete}
\end{align} 
where $\mathcal{F}_h:V_h\rightarrow\mathbb{R}$ is the induced discretization of $\mathcal{F}$. With the operator $\mathcal{F}$, we define the dual problem, where we seek the solution $z(\cdot,\omega)\in H_0^1(D)$ such that
\begin{align}
\int_D a(x,\omega)\nabla_x z(x,\omega)\cdot\nabla_x v(x)\mathrm{d}x = \mathcal{F}(v) \quad \forall v\in H_0^1(D).\label{dual problem}
\end{align} 
Since $\mathcal{F}$ is linear and bounded, existence and uniqueness of a solution of the dual problem~\eqref{dual problem} is ensured by the Lax--Milgram theorem~\cite[Section 6.2.1]{evans10}. By Assumption~\ref{regularity of a}, it follows that $z(\cdot,\omega)\in H^2(D)\cap H_0^1(D)$, which we require in the proof of Theorem~\ref{Thm. LSF distance}.

Our analysis is performed for the \emph{probability of failure}. This quantity is defined as the probability mass of the \emph{failure domain} $A:= \{u\in\mathbb{R}^n: G(u)\le 0\}$, which is expressed as
\begin{align}
P_{f} := \mathbb{P}[A] = \mathbb{P}[G(U)\le0] = \int_{u\in\mathbb{R}^n} I(G(u)\le0) \varphi_n(u) \mathrm{d}u,\label{probability of failure}
\end{align}
where $I$ denotes the indicator function; i.e., $I(\mathrm{true}) = 1$ and $I(\mathrm{false}) = 0$. The function $\varphi_n: \mathbb{R}^n \rightarrow \mathbb{R}$ denotes the \emph{probability density function} (PDF) of the $n$-variate independent standard normal distribution, which we denote by $\mathrm{N}(0, \mathrm{Id}_n)$. Replacing $G$ by $G_{h}$ in~\eqref{probability of failure} gives the approximation 
\begin{align}
P_{f,h} := \mathbb{P}[A_h] = \mathbb{P}[G_h(U)\le 0] = \int_{u\in\mathbb{R}^n} I(G_h(u)\le0)\varphi_n(u)\mathrm{d}u,\label{approx probability of failure}
\end{align}
where $A_h=\{u\in\mathbb{R}^n: G_h(u)\le 0\}$. Since $P_{f,h}$ includes numerical errors due to approximating the exact LSF $G$, we cannot expect equality of $P_f$ and $P_{f,h}$. The main contribution of this work is the derivation of an upper bound for the error 
\begin{equation} \label{Err_proboffailure}
\vert P_f - P_{f,h}\vert\le\mathbb{P}[A \triangle A_h],
\end{equation}
where $$A \triangle A_h := (A\setminus A_h)\cup (A_h\setminus A) = \{u\in\mathbb{R}^n : \text{ either } u \in A \text{ or } u \in A_h\}$$ is the \emph{symmetric difference} of $A$ and $A_h$.
This upper bound behaves as a certain approximation to the rare event probability with the approximate model multiplied with the discretization error of $\vert \mathcal{F}y - \mathcal{F}_h y_h\vert$.
For the latter, we assume the following approximation property.

\begin{assumption}[Approximation error of the LSF]\label{Ass LSF error}
The operator $\mathcal{F}$ is linear and bounded and there exists constants $C_{\mathrm{FE}}>0$ and $s>0$ independent of $h$, such that the discretization error with respect to the solution of~\eqref{pathwise} and~\eqref{pathwise discrete} satisfies for $\mathbb{P}\text{-a.e. }\omega\in\Omega$
\begin{align}
\vert G(U(\omega)) - G_h(U(\omega))\vert = \vert \mathcal{F}y(\cdot, \omega) - \mathcal{F}_h y_h(\cdot,\omega)\vert \le C_{\mathrm{FE}}h^s.\label{ass uniform bound}
\end{align}
\end{assumption}
Moreover, we require Lipschitz continuity of the CDFs of the random variables $G(U)$ and $G_h(U)$.
\begin{assumption}[Regularity of the CDFs of $G(U)$, $G_h(U)$]\label{Ass lipschitz}
The CDFs of the random variables $G(U)$ and $G_h(U)$ are local Lipschitz continuous with Lipschitz constants $C_L>0$ and $C_{L,h}>0$, i.e., for $a,b$ with $a<b$ it holds
\begin{align*}
\mathbb{P}[G(U)\in ]a,b]] &= \mathbb{P}[G(U)\le b] - \mathbb{P}[G(U)\le a] \le C_L \vert a - b \vert,
\\\mathbb{P}[G_h(U)\in ]a,b]] &= \mathbb{P}[G_h(U)\le b] - \mathbb{P}[G_h(U)\le a] \le C_{L,h} \vert a - b \vert.
\end{align*}
\end{assumption}

\begin{remark}\label{Remark uniform}
Note that the uniform bound in~\eqref{ass uniform bound} might not be valid for diffusion coefficients which are only pathwise elliptic and bounded, i.e., are of type (II) in Definition~\ref{Defintion diffusion coeff}. In this case, $C_{\mathrm{FE}}$ is a random variable and depends on $\omega\in\Omega$ such that
\begin{align*}
\vert G(U(\omega)) - G_h(U(\omega))\vert = \vert \mathcal{F}y(\cdot, \omega) - \mathcal{F}_h y_h(\cdot,\omega)\vert \le C_{\mathrm{FE}}(\omega)h^s.
\end{align*}
One idea to handle such cases is the restriction of the random variable $U$ to a bounded domain $B_R=\{u\in\mathbb{R}^n: \Vert u \Vert_2\le R\}$, where $R>0$. Thus, the random variable $C_{\mathrm{FE}}(\omega)$ is bounded uniformly within $\{\omega\in\Omega: U(\omega)\in B_R\}$. This idea can be seen as truncating the tails of the $n$-variate normal distribution. For log-normal random fields, the truncation yields a uniformly elliptic and bounded diffusion coefficient which satisfies~\eqref{ass uniform bound}. In Remark~\ref{Remark error bound}, we further discuss this idea and investigate our provided error bounds for pathwise elliptic and bounded diffusion coefficients.
\end{remark}
In fact, the following example considers a case, where $C_{\mathrm{FE}}$ depends on $\omega\in\Omega$. The diffusion coefficient $a_{\mathrm{II}}$ is a log-normal random field and, thus, not uniformly elliptic and bounded.

\begin{example}\label{example 1}
We consider the model problem~\eqref{elliptic PDE} and~\eqref{boundary term} with $D=(0,1)$, $f(x)=0$ for all $x\in D$ and boundary conditions $y(0,\omega)=1$ and $y(1,\omega)=0$. A similar problem is considered in \cite{Straub16}. We consider two examples for the diffusion coefficient
\begin{align*}
a_{\mathrm{I}}(x,\omega) := 2 + \tanh(Z(x,\omega)),\quad\quad a_{\mathrm{II}}(x,\omega) := \exp(Z(x,\omega)).
\end{align*}
In both examples, $Z$ is a Gaussian random field. We note that $a_{\mathrm{I}}$ is uniformly elliptic and bounded, i.e.~\eqref{Def uniform} is satisfied, since $1\le a_{\mathrm{I}}(x,\omega)\le 3$ for $\mathbb{P}$-a.e. $\omega\in\Omega$ and all $x\in D$. The authors of \cite{Fenton03} employ a similar random field model to describe a geotechnical material parameter. The diffusion coefficient $a_{\mathrm{II}}$ is only pathwise elliptic and bounded, i.e.~\eqref{Def pathwise} is satisfied. We assume that $Z$ has constant mean $\mu_Z$ and constant variance $\sigma_Z^2$, while the covariance function is of exponential type. It is well known that the exponential covariance kernel $c(x,y):=\exp(\Vert x- y\Vert_1/\lambda)$ produces realisations which are not continuously differentiable \cite[Chapter 4]{Rasmussen06}. The parameter $\lambda$ denotes the correlation length. The random field $Z$ can be approximated via its truncated Karhunen--Lo\`{e}ve expansion (KLE)
\begin{align*}
Z(x,\omega) \approx Z_n(x,\omega):=\mu_Z + \sigma_Z\sum_{m=1}^{n}\sqrt{\nu_m}z_m(x)U_m(\omega),
\end{align*}
where $(\nu_m, z_m)$ are the KL eigenpairs of the correlation operator. For the exponential covariance kernel, a derivation of the eigenpairs is given in \cite[Section 2.3.3]{Ghanem91}. We note that the approximation error introduced by the truncation of the KLE is not part of our analysis and the truncation order $n$ is fixed.
\\ The random variables $\{U_m\}_{m=1}^n$ are independent and standard normally distributed. Since we consider finitely many KL terms and the eigenfunctions $z_m(\cdot)$ are smooth, the realisations $a_{\mathrm{I},n}(\cdot,\omega):=2+\tanh(Z_n(\cdot,\omega))$ and $a_{\mathrm{II},n}(\cdot,\omega):=\exp(Z_n(\cdot,\omega))$ are sufficiently smooth and Assumptions~\ref{regularity of a} and~\ref{Ass gaussian} are satisfied. By the Sobolev embedding theorem \cite[Theorem 6.48]{Hackbusch17}, the solution $y(\cdot,\omega)$ is continuously differentiable. Failure occurs if the flow rate 
\begin{align}
q(x,\omega) := -a_{\mathrm{I}/\mathrm{II},n}(x,\omega) \frac{\partial y(x,\omega)}{\partial x}\label{flow rate}
\end{align}
is larger than $q_{\mathrm{max}}$ at $x=1$. Hence, the operator $\mathcal{F}$ is given as the point evaluation of the flow rate $q$ at $x=1$, which yields the LSF $G(U(\omega)) =q_{\mathrm{max}} - q(1,\omega)$. Linear FEs are applied to derive a discretization. By \cite[Section 1.6]{Strang97}, it follows that 
\begin{align*}
\Vert y(\cdot, \omega) - y_h(\cdot, \omega)\Vert_{W^{1,\infty}} \le C \Vert y(\cdot,\omega)\Vert_{W^{2,\infty}} h.
\end{align*} 
For the diffusion coefficient $a_{\mathrm{I}}$, $C \Vert y(\cdot,\omega)\Vert_{W^{2,\infty}}\le C_{\mathrm{FE}}$ can be uniformly bounded and Assumption~\ref{Ass LSF error} is satisfied for $s=1$. However, for the diffusion coefficient $a_{\mathrm{II}}$, $C_{\mathrm{FE}}(\omega)=C \Vert y(\cdot,\omega)\Vert_{W^{2,\infty}}$ is a random variable. In Section~\ref{Section 5.3}, we consider again this example with the log-normal diffusion coefficient $a_{\mathrm{II}}$.
\end{example}

\subsection{FORM probability of failure} 
We derive an upper bound for the error given in~\eqref{Err_proboffailure} which depends on the PDE discretization error and on an approximation of the probability of failure. This approximation will be given by the FORM estimate of the probability of failure; see \cite{Hasofer74,Kiureghian04} for details. 
We now briefly introduce the FORM method.

We define the MLFP $u^{\mathrm{MLFP}} \in \mathbb{R}^n$ as the solution of the minimization problem
\begin{align*}
\underset{u\in\mathbb{R}^n}{\mathrm{min}} \quad \frac{1}{2} \Vert u \Vert_2^2, \quad\text{such that}\quad G(u) = 0.
\end{align*}
Hence, $u^{\mathrm{MLFP}}$ is the element of the set $\{ G = 0\}$ that has smallest distance to the origin and, thus, maximizes the Gaussian density $\varphi_n$.
Accordingly, we denote the MLFP with respect to the discretization $G_h$ as $u_h^{\mathrm{MLFP}}$. We require that $G(0)>0$ and $G_h(0)>0$, since we are generally interested in estimating failure probabilities which are in the tail of the densities. Using the MLFPs, we obtain an estimate for the probability of failure via
\begin{align*}
P_f^{\mathrm{FORM}} = \Phi(-\Vert u^{\mathrm{MLFP}}\Vert_2) \quad\text{and}\quad P_{f,h}^{\mathrm{FORM}} = \Phi\left(-\Vert u_h^{\mathrm{MLFP}}\Vert_2\right),
\end{align*}
where $\Phi$ is the CDF of the one-dimensional standard normal distribution. The FORM estimate is equal to the probability mass of the half-space which is defined through the hyperplane at the MLFP with direction perpendicular to the surface of the failure domain. Thus, the FORM estimate is an upper bound for the probability of failure, if the failure domain is convex. We use the convexity of the failure domains as an assumption on the LSF for Theorem~\ref{Thm error bound}. We state the convexity assumption in Assumption \ref{Ass. convex failure domains}. In Assumption~\ref{Assumption Gradient LSF}, we state an assumption on the gradient of the LSF, which is relevant for the proof of Theorem~\ref{Thm. LSF distance}.

\begin{assumption}[Geometry of the failure domains]\label{Ass. convex failure domains}
The failure domains $A$ and $A_h$ are unbounded, convex sets. 
\end{assumption}
\begin{assumption}[Non-degeneracy of $\nabla_u G$ and $\nabla_u G_h$ at the limit-state surface]\label{Assumption Gradient LSF}
For all $h>0$ there exists $\nu_h>0$ such that for almost every $u\in\partial A$ it holds $\nabla_u G(u)\neq 0$, $\nabla_u G_h(u)\neq 0$ and $\vert\cos\left(\sphericalangle\left(u-u_h,\nabla_u G(u)\right)\right)\vert\ge\nu_h$, where $u_h\in\partial A_h$ is the point that has minimal distance to $u$ and $\sphericalangle(\cdot,\cdot)$ denotes the angle between two vectors. 
\end{assumption}
Assumption~\ref{Assumption Gradient LSF} states that the direction from a point $u\in \partial A$ to its nearest neighbour $u_h\in\partial A_h$ is not orthogonal to the gradient $\nabla_u G(u)$. 

\subsection{Error bound for the probability of failure}\label{subsection error bound}
The following proposition and theorem are the main statements of this manuscript. Proposition~\ref{proposition} states an error bound of the relative error with respect to the FORM estimates $P_f^{\mathrm{FORM}}$ and $P_{f,h}^{\mathrm{FORM}}$. This bound is applicable in the general case where the geometries of the failure domains are unknown. In particular, the convexity of the failure domains is not required. In Theorem~\ref{Thm error bound}, we require convexity of the failure domains. Hence, the situation is more restrictive as compared with Proposition~\ref{proposition}. In this case, we derive an error bound of the absolute error $\vert P_f -P_{f,h}\vert$ in dependence of the discretized FORM estimate $P_{f,h}^{\mathrm{FORM}}$. Subsequent to Theorem~\ref{Thm error bound}, we give a remark which discusses the error bounds if the approximation error of $G$ and $G_h$ in Assumption~\ref{Ass LSF error} is not uniformly bounded. This remark builds on Remark~\ref{Remark non-uniformly elliptic} and~\ref{Remark uniform}.

\begin{proposition}\label{proposition}
Let $a(x,\omega)$ be a uniformly elliptic and bounded diffusion coefficient and let Assumptions~\ref{regularity of a},~\ref{Ass gaussian},~\ref{Ass LSF error},~\ref{Ass lipschitz}, and~\ref{Assumption Gradient LSF} hold. Then for $h>0$ sufficiently small, the relative error of the FORM estimates is upper bounded by 
\begin{align}
\frac{\vert P_f^{\mathrm{FORM}}-P_{f,h}^{\mathrm{FORM}}\vert}{P_f^{\mathrm{FORM}}} \le \widehat{C}^{\mathrm{FORM}} h^s.\label{error bound proposition}
\end{align}
\end{proposition}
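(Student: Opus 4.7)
The strategy is to reduce the proposition to a bound on the reliability indices $\beta:=\|u^{\mathrm{MLFP}}\|_2$ and $\beta_h:=\|u_h^{\mathrm{MLFP}}\|_2$, and then upgrade from an absolute to a relative bound by exploiting the Mills ratio for the Gaussian tail. More precisely, the plan is to first prove $|\beta-\beta_h|\le Ch^s$ with $C$ independent of $h$, and then use the mean value theorem together with standard lower bounds on $\Phi(-\beta)$ to convert this into the desired relative error on $\Phi(-\beta)$ versus $\Phi(-\beta_h)$.

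For the reliability index bound, I would proceed symmetrically around the two MLFPs. Starting from $u^{\mathrm{MLFP}}\in\partial A$, Assumption~\ref{Ass LSF error} yields $|G_h(u^{\mathrm{MLFP}})|=|G(u^{\mathrm{MLFP}})-G_h(u^{\mathrm{MLFP}})|\le C_{\mathrm{FE}}h^s$. Letting $u_h\in\partial A_h$ denote the nearest point to $u^{\mathrm{MLFP}}$, a first-order Taylor expansion of $G$ (using $G(u_h)=(G-G_h)(u_h)=O(h^s)$ and $G(u^{\mathrm{MLFP}})=0$) together with the cosine bound from Assumption~\ref{Assumption Gradient LSF} gives
\[
\|u^{\mathrm{MLFP}}-u_h\|\;\le\;\frac{C_{\mathrm{FE}}h^s}{\nu_h\,\|\nabla_u G(u^{\mathrm{MLFP}})\|},
\]
so that $\beta_h\le\|u_h\|\le\beta+O(h^s)$ by the triangle inequality. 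The reverse inequality $\beta\le\beta_h+O(h^s)$ I would establish via an implicit function theorem argument: because $|G(u_h^{\mathrm{MLFP}})|\le C_{\mathrm{FE}}h^s$ and $\nabla_u G$ is continuous and nonzero in a neighbourhood of $\partial A$, a step of length $O(h^s)$ from $u_h^{\mathrm{MLFP}}$ in the direction $\nabla_u G/\|\nabla_u G\|^{2}$ reaches a point of $\partial A$.

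To pass from $|\beta-\beta_h|\le Ch^s$ to the relative bound, the mean value theorem gives $|\Phi(-\beta)-\Phi(-\beta_h)|=\varphi_1(\xi)|\beta-\beta_h|$ for some $\xi$ between $\beta$ and $\beta_h$. For the denominator I use the classical Mills-type lower bound $\Phi(-\beta)\ge\beta\varphi_1(\beta)/(1+\beta^2)$, together with the identity $\varphi_1(\xi)/\varphi_1(\beta)=\exp(-(\xi-\beta)(\xi+\beta)/2)=1+O(\beta h^s)$ which follows from $|\xi-\beta|\le|\beta-\beta_h|=O(h^s)$. Combining these estimates yields
\[
\frac{|P_f^{\mathrm{FORM}}-P_{f,h}^{\mathrm{FORM}}|}{P_f^{\mathrm{FORM}}}\;\le\;\frac{\varphi_1(\xi)}{\Phi(-\beta)}\,|\beta-\beta_h|\;\le\;C'\beta\,|\beta-\beta_h|\;\le\;\widehat{C}^{\mathrm{FORM}}h^s,
\]
where $\widehat{C}^{\mathrm{FORM}}$ depends on $C_{\mathrm{FE}}$, $\nu_h$, a lower bound on $\|\nabla_u G\|$ near the limit-state surface, and on $\beta$.

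The main obstacle is the reverse inequality $\beta\le\beta_h+O(h^s)$: Assumption~\ref{Assumption Gradient LSF} is phrased one-directionally, from $\partial A$ towards $\partial A_h$, so the symmetric control must be recovered by an implicit function argument that requires non-degeneracy of $\nabla_u G$ not only on $\partial A$ itself but in an $O(h^s)$-tube around it, which in turn relies on continuity of $\nabla_u G$ and on first certifying that $u_h^{\mathrm{MLFP}}$ lies in that tube. A secondary subtlety is that the Mills ratio step introduces the factor $\beta$ in front of $|\beta-\beta_h|$, so one must verify that $\widehat{C}^{\mathrm{FORM}}$ remains $h$-independent; this is ensured by the fact that $\beta$ is fixed and $\beta_h\to\beta$ as $h\to 0$, which also justifies the requirement that $h$ be sufficiently small.
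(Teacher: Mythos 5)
Your proposal is correct in outline and shares the paper's architecture --- reduce to $\vert b-b_h\vert=\mathcal{O}(h^s)$ for the reliability indices, then convert to a relative bound via Gaussian tail estimates --- but it replaces the paper's central technical device for the first step. The paper obtains $\vert b-b_h\vert\le C_3(h)h^s$ from Theorem~\ref{Thm. LSF distance}, whose proof embeds the limit-state surface into a family of regularized optimal control problems~\eqref{surface opt control} and invokes the a priori estimate of Theorem~\ref{Thm opt control a priori} (stability plus coercivity of the Hessian restricted to a cone of directions), before sending the regularization parameter $\alpha\to 0$; your direct Taylor/implicit-function argument is more elementary and arrives at essentially the same constant $C_{\mathrm{FE}}/(\nu_h\Vert\nabla_u G\Vert)$, but you should be aware that the mean-value form of the expansion evaluates $\nabla_u G$ at an intermediate point $\xi$ rather than on $\partial A$, so the cosine bound of Assumption~\ref{Assumption Gradient LSF} only applies after a continuity argument for $h$ small --- this is precisely the role the coercivity-on-a-cone argument plays in the paper. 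Your flagged obstacle (the one-sided phrasing of Assumption~\ref{Assumption Gradient LSF} and the resulting asymmetry between $b_h\le b+\mathcal{O}(h^s)$ and $b\le b_h+\mathcal{O}(h^s)$) is a fair observation; the paper itself does not fully resolve it, as the proof of Theorem~\ref{Thm error bound} applies the reverse distance bound~\eqref{Thm ass distance} with $\widetilde{C}=C_3(h)$ without a separate proof. For the second step the two routes coincide in substance: your Mills-ratio lower bound $\Phi(-\beta)\ge\beta\varphi_1(\beta)/(1+\beta^2)$ is exactly the bound $F_l$ in~\eqref{CDF bound 1}, and your mean-value-theorem step is the paper's monotonicity argument for $F_u'$ in Lemma~\ref{Lipschitz bound}; the paper simply packages this as Theorem~\ref{relative error linear case} applied to the auxiliary affine LSF $\widetilde{G}(U_1)=U_1+b$, which has the advantage of reusing the one-sided/non-one-sided case distinction and producing an explicit constant $C_2(b,1,h^s,C_3(h))$. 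What your route buys is a shorter, self-contained argument that does not need the optimal control machinery; what the paper's route buys is a uniform surface-distance bound over all of $\partial A$ (needed later for Theorem~\ref{Thm bound of Gaussian measure}) and explicit, traceable constants.
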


\begin{theorem}\label{Thm error bound}
Let $a(x,\omega)$ be a uniformly elliptic and bounded diffusion coefficient and let Assumptions~\ref{regularity of a},~\ref{Ass gaussian},~\ref{Ass LSF error},~\ref{Ass lipschitz},~\ref{Ass. convex failure domains} and~\ref{Assumption Gradient LSF} hold. Then for $h>0$ sufficiently small, the error of the exact and approximate probability of failure is upper bounded by
\begin{align}
\vert P_f - P_{f,h}\vert \le \widehat{C} h^s P_{f,h}^{\mathrm{FORM}}.\label{error bound theorem}
\end{align}
\end{theorem}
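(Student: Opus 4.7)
The starting point is the pointwise bound~\eqref{Err_proboffailure}, which reduces the task to estimating $\mathbb{P}[A \triangle A_h]$. I would proceed in three stages, mirroring the blueprint at the end of the introduction. First, I would translate the pointwise LSF error $|G - G_h| \le C_{\mathrm{FE}} h^s$ from Assumption~\ref{Ass LSF error} into a geometric statement on the distance between $\partial A$ and $\partial A_h$: using the non-degeneracy of $\nabla_u G$ together with the cosine lower bound $\nu_h$ of Assumption~\ref{Assumption Gradient LSF}, every $u\in\partial A$ has its nearest neighbour $u_h\in\partial A_h$ at Euclidean distance $O(h^s)$. This is the content foreshadowed for Theorem~\ref{Thm. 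LSF distance}, and it confines $A \triangle A_h$ to an $O(h^s)$-thin shell around $\partial A_h$.

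Second, I would exploit convexity of $A$ and $A_h$ (Assumption~\ref{Ass. convex failure domains}) to reduce the Gaussian mass of this shell to a one-dimensional Gaussian slab measure. After rotating coordinates so that $u_h^{\mathrm{MLFP}}/\|u_h^{\mathrm{MLFP}}\|_2$ becomes the first axis, convexity ensures that the shell projects injectively onto this axis, so by Fubini the shell measure is dominated by the Gaussian mass of a slab $S_{\varepsilon} := \{u : |G_h(u)| \le \varepsilon\}$ with $\varepsilon = O(h^s)$. This is the \emph{symmetric difference versus interval in 1D} comparison announced in the introduction.

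Third, to obtain the FORM factor $P_{f,h}^{\mathrm{FORM}}$ rather than the crude constant $C_{L,h}$ from Assumption~\ref{Ass lipschitz}, I would replace $G_h$ in the slab estimate by its linearization at the MLFP, $\widetilde{G}_h(u) := \nabla G_h(u_h^{\mathrm{MLFP}}) \cdot (u - u_h^{\mathrm{MLFP}})$. The zero set of $\widetilde{G}_h$ is exactly the tangent hyperplane underlying the FORM construction, and its MLFP is again $u_h^{\mathrm{MLFP}}$, so the associated FORM probability is $P_{f,h}^{\mathrm{FORM}}$ as well. By the affine-linear analysis of Section~\ref{Sec linear lsf}, the CDF of $\widetilde{G}_h(U)$ has Lipschitz constant of order $P_{f,h}^{\mathrm{FORM}}$, which gives $\mathbb{P}[\,|\widetilde{G}_h(U)| \le \varepsilon\,] = O(\varepsilon \cdot P_{f,h}^{\mathrm{FORM}})$. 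Assembling the three stages yields the bound $\widehat{C}\, h^s P_{f,h}^{\mathrm{FORM}}$.

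The principal obstacle I anticipate is in the second stage: passing from the $n$-dimensional symmetric difference to a one-dimensional slab measure without picking up a factor depending on the diameter of the shell. Convexity of both domains is essential here, because it forces each ray from the origin to cross $\partial A$ and $\partial A_h$ at most once, so the shell is genuinely a tubular neighbourhood rather than a disconnected union. A second delicate point is controlling the substitution of $\widetilde{G}_h$ for $G_h$ in step three: one must argue that the difference between the slabs $\{|G_h|\le\varepsilon\}$ and $\{|\widetilde{G}_h|\le\varepsilon\}$ contributes only at higher order in $h$, either by restricting attention to a neighbourhood of $u_h^{\mathrm{MLFP}}$ and invoking rapid Gaussian decay to absorb curvature contributions away from it, or by using convexity to sandwich one region inside a mild dilation of the other.
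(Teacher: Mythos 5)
Your skeleton matches the paper's: a surface-distance estimate of order $h^s$ (Theorem~\ref{Thm. LSF distance}), a convexity-based reduction of $\mathbb{P}[A\triangle A_h]$ to a one-dimensional Gaussian interval at the MLFP, and the affine-linear analysis of Section~\ref{Sec linear lsf} applied at $-b_h$ to produce the factor $P_{f,h}^{\mathrm{FORM}}$. The gap is in your second and third stages, which is exactly where the paper has to work hardest (Theorem~\ref{Thm bound of Gaussian measure}). Two specific problems. First, the Fubini argument: after rotating so that $u_h^{\mathrm{MLFP}}$ lies on a coordinate axis, the $O(h^s)$-shell around $\partial A_h$ does \emph{not} project onto an interval of length $O(h^s)$ on that axis --- for an unbounded convex failure domain the boundary extends over an $O(1)$ (indeed unbounded) range of the axial coordinate, so slicing perpendicular to that axis gives nonempty slices on a set of axial values that is not small, and Fubini in that direction does not by itself dominate the shell measure by a one-dimensional slab of width $O(h^s)$. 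Second, the substitution of the linearization $\widetilde{G}_h$ for $G_h$: the discrepancy between $\{\vert G_h\vert\le\varepsilon\}$ and $\{\vert\widetilde{G}_h\vert\le\varepsilon\}$ is \emph{not} higher order in $h$; the parts of the curved shell away from the MLFP carry Gaussian mass that is smaller than the mass near the MLFP only by a fixed, geometry-dependent factor, not by a factor of $h$, so ``rapid Gaussian decay'' cannot absorb them into a remainder term.

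The paper's mechanism, which your plan would need to rediscover, is a radial decomposition: write the Gaussian measure as an integral over centred spheres $S_{n-1}(r)$, on each of which the density is constant, and compare the $(n{-}1)$-dimensional measure of $(A\triangle A_h)\cap S_{n-1}(r)$ with that of the flat slab $\{u\in\mathbb{R}^n: u_n\in\,]-b_h-\widetilde{C}h^s, -b_h+\widetilde{C}h^s]\}$ intersected with the same sphere. Convexity of $A_h$ enters through a surface-area comparison: the surface measure of $\partial A_h\cap B_R$ is at most that of the boundary of the convex body $B_R\cap\{u_n\le -b_h\}$, i.e.\ disk plus spherical cap, which is at most $C_4(n)=1+\pi^{1/2}\Gamma((n+1)/2)/\Gamma(n/2)$ times the disk alone. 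This dimension-dependent constant, growing like $n^{1/2}$, is precisely the price of the curvature you were hoping to absorb at higher order in $h$; it cannot be avoided, but it is harmless because it multiplies $h^s P_{f,h}^{\mathrm{FORM}}$ rather than degrading the rate. Once the reduction to the one-dimensional interval at $-b_h$ is in place, the paper applies Theorem~\ref{relative error linear case} to the genuinely one-dimensional LSF $U_1+b_h$, so no comparison between curved and flat slabs in $\mathbb{R}^n$ is ever needed.
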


We note that the constants $\widehat{C}^{\mathrm{FORM}}>0$ and $\widehat{C}>0$ in~\eqref{error bound proposition} and~\eqref{error bound theorem}, respectively, depend on $C_{\mathrm{FE}}$, $h$, $n$, $\Vert u^{\mathrm{MLFP}}\Vert_2$, and $\Vert u_h^{\mathrm{MLFP}}\Vert_2$. We will discuss the behaviour of $\widehat{C}^{\mathrm{FORM}}$ and $\widehat{C}$ with respect to their dependencies in the following sections. 
The outline of the proof of Theorem~\ref{Thm error bound} is as follows:
\begin{itemize}
\item[(P1)] For the absolute error, we derive the bound
\begin{align*}
\vert P_f - P_{f,h}\vert \le C_1 h^s,
\end{align*}
where $C_1$ depends on the local Lipschitz constant $C_L$ of the CDF of $G(U)$.
\item[(P2)] Under the assumption that $G$ is affine linear with respect to $U$, we derive an upper bound for the local Lipschitz constant of the CDF of $G(U)$ which depends linearly on $P_{f}$. This yields an upper bound for the relative error
\begin{align*}
\vert P_f - P_{f,h}\vert/P_{f} \le C_2 h^s.
\end{align*}
\item[(P3)] For LSFs of the form~\eqref{LSF form} and~\eqref{LSF form discrete}, we prove that the distance between the exact \emph{limit-state surface} $\partial A:=\{u\in\mathbb{R}^n:G(u)=0\}$ and its approximation $\partial A_h:=\{u\in\mathbb{R}^n:G_h(u)=0\}$ has order $\mathcal{O}(h^s)$ of convergence for $h>0$ sufficiently small, i.e., for all $u\in \partial A$ it holds
\begin{align*}
\mathrm{dist}(u, \partial A_h) \le C_3 h^s.
\end{align*}
\item[(P4)] Under the assumption that $\mathrm{dist}(u, \partial A_h) \le C_3 h^s$, we derive an upper bound for the Gaussian measure of the symmetric difference $A\triangle A_h$ in the form
\begin{align*}
\mathbb{P}[A\triangle A_h] \le C_4\mathbb{P}[U_1 \in ]-b_h-C_3h^s, -b_h + C_3h^s]],
\end{align*}
where $U_1$ is distributed according to $\mathrm{N}(0,1)$ and $b_h :=\Vert u_h^{\mathrm{MLFP}}\Vert_2$. 
\item[(P5)] We define the affine linear function $\widetilde{G}:=U_1+b_h$ and apply the derived bound of the linear case (P2) to $\widetilde{G}$ to prove~\eqref{error bound theorem}.
\end{itemize}
In the following sections, we provide full details of the steps (P1)--(P5). The proof of Proposition~\ref{proposition} requires (P1)--(P3) and a similar form of (P5) but does not require the upper bound of the Gaussian measure of $A\triangle A_h$ in (P4). Indeed, convexity of the failure domain is only required to prove (P4).

\begin{remark}\label{Remark error bound}
Given the idea from Remark~\ref{Ass LSF error}, we conjecture that a similar error bound as in~\eqref{error bound theorem} also holds in the case of pathwise ellipticity. Given an error tolerance $\epsilon>0$, we choose $R$ such that $\mathbb{P}[U\not\in B_R]\le\epsilon/2$ and we bound the random variable $C_{\mathrm{FE}}(\omega)$ in $B_R$. We propose to choose $\epsilon\ll P_f$ to ensure that the truncated tails do not contain a large probability mass of the failure domain. We define the quantities $P_f^{\epsilon}:=\mathbb{P}[G(U)\le 0 \cap U\in B_R]$ and $P_{f,h}^{\epsilon}:=\mathbb{P}[G_h(U)\le 0 \cap U\in B_R]$. By the triangle inequality it holds that
\begin{align*}
\vert P_f - P_{f,h}\vert \le \vert P_f- P_f^{\epsilon}\vert + \vert P_f^{\epsilon}-P_{f,h}^{\epsilon}\vert + \vert P_{f,h}-P_{f,h}^{\epsilon}\vert \le \vert P_f^{\epsilon}-P_{f,h}^{\epsilon}\vert + \epsilon.
\end{align*}
Thus, the restriction to $B_R$ leads to an $\epsilon$-error for the absolute error of the probability of failure estimates. The proof of Theorem~\ref{Thm error bound} can be used as a starting point to derive a similar error bound for the absolute error $\vert P_f^{\epsilon}-P_{f,h}^{\epsilon}\vert$. The same idea can be used to derive an error bound for the relative error with respect to the FORM estimates in Proposition~\ref{proposition} in the case of pathwise ellipticity. However, providing a complete proof of these bounds is out of the scope of this paper. We note that $\epsilon$ and $R$ are chosen with respect to the probability of failure $P_f$ and do not depend on $C_{\mathrm{FE}}(\omega)$. Choosing a small $\epsilon$, requires a large radius $R$ and a large upper bound for $C_{\mathrm{FE}}(\omega)$ within $B_R$. If the user specified error tolerance $\epsilon$ is chosen, the upper bound of $C_{\mathrm{FE}}(\omega)$ is constant for the whole analysis and does not blow up.
\\We conclude that the error bounds are useful also in cases where the diffusion coefficient is only pathwise elliptic and bounded, i.e., satisfies~\eqref{Def pathwise}. In the numerical experiments, we will only consider such settings. Indeed, the numerical results give evidence for our conjecture.
\end{remark}

\begin{remark}\label{Remark distance}
We note that the approximation property of the LSF given in Assumption~\ref{Ass LSF error} determines the approximation property of the probability of failure. If the approximation error of the LSF behaves in a more general form, (P1) and (P2) can be directly applied to show that the approximation error of the probability of failure behaves in the same manner. However, (P3) is only applicable for LSFs stemming from an elliptic PDE and satisfying the regularity assumptions. Indeed, if it is possible to show that (P3) holds for more general approximation properties of the LSF, then (P4) and (P5) are directly applicable. 
\\In Section~\ref{Sec 5.1}, we consider an LSF which involves an ordinary differential equation (ODE). For this example, we show that the distance of the failure domains behaves as the convergence order of the applied time stepping scheme. Thus, (P3) is also valid and our error bounds are applicable in this setting. 
\end{remark}

\subsection{Absolute error bound}\label{Sec. absolute error}
Under Assumption~\ref{Ass LSF error} and~\ref{Ass lipschitz}, we prove that the upper bound for the absolute error of the probability of failure behaves as the approximation error of the LSF, which proves (P1). This result and proof technique are similar to \cite[Lemma 3.4]{Elfverson16}. Considering equations~\eqref{probability of failure} and~\eqref{approx probability of failure}, the approximation error is based on the symmetric difference $A\triangle A_h$. The following lemma gives an upper bound for the absolute error.

\begin{lemma}\label{lemma absolute error}
Let Assumptions~\ref{Ass LSF error} and~\ref{Ass lipschitz} hold. Then, the absolute approximation error of the probability of failure is bounded in the following way:
\begin{align}
\vert P_f - P_{f,h}\vert &\le \mathbb{P}[G(U)\in ]-C_{\mathrm{FE}}h^s,C_{\mathrm{FE}}h^s]]\label{approx error}
\\ &\le 2C_LC_{\mathrm{FE}}h^s =: C_1h^s.\label{approx error lipschitz}
\end{align}
\end{lemma}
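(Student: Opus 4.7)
The plan is to exploit the elementary inclusion $\vert P_f - P_{f,h}\vert\le\mathbb{P}[A\triangle A_h]$ already recorded in~\eqref{Err_proboffailure}, and then to show that the symmetric difference $A\triangle A_h$ is contained in the thin slab where $G(U)$ lies within a window of half-width $C_{\mathrm{FE}}h^s$ around zero. The second inequality in the lemma is then a one-line application of Assumption~\ref{Ass lipschitz}.

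First I would characterise $A\triangle A_h$ pointwise by splitting into the two halves of the symmetric difference. If $u\in A\setminus A_h$ then $G(u)\le 0$ while $G_h(u)>0$; combined with Assumption~\ref{Ass LSF error} this yields $G(u)\ge G_h(u)-C_{\mathrm{FE}}h^s>-C_{\mathrm{FE}}h^s$, hence $G(u)\in\,]-C_{\mathrm{FE}}h^s,0]$. Symmetrically, if $u\in A_h\setminus A$ then $G(u)>0$ and $G(u)\le G_h(u)+C_{\mathrm{FE}}h^s\le C_{\mathrm{FE}}h^s$, so $G(u)\in\,]0,C_{\mathrm{FE}}h^s]$. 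Taking the union of these two cases yields the inclusion
$$
A\triangle A_h\subseteq\{u\in\mathbb{R}^n:G(u)\in\,]-C_{\mathrm{FE}}h^s,C_{\mathrm{FE}}h^s]\},
$$
and combining this with~\eqref{Err_proboffailure} delivers~\eqref{approx error}.

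For the second inequality I would simply invoke Assumption~\ref{Ass lipschitz} with $a:=-C_{\mathrm{FE}}h^s$ and $b:=C_{\mathrm{FE}}h^s$, obtaining $\mathbb{P}[G(U)\in\,]a,b]]\le C_L\vert a-b\vert=2C_LC_{\mathrm{FE}}h^s$, which is precisely~\eqref{approx error lipschitz}.

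Every step is a direct consequence of the stated hypotheses, so there is no genuine technical obstacle; the lemma is morally a careful restatement of the assumptions. The only point that deserves some care is keeping track of which direction of Assumption~\ref{Ass LSF error} is used in each of the two cases of the symmetric difference, so that the endpoint bookkeeping for the half-open intervals matches up when they are unioned.
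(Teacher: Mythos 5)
Your proof is correct and follows essentially the same route as the paper: both arguments reduce $\vert P_f - P_{f,h}\vert$ to $\mathbb{P}[A\triangle A_h]$ and then use Assumption~\ref{Ass LSF error} to locate $G(u)$ in $]-C_{\mathrm{FE}}h^s,0]$ on $A\setminus A_h$ and in $]0,C_{\mathrm{FE}}h^s]$ on $A_h\setminus A$, before applying the local Lipschitz bound. Your direct set-inclusion phrasing merely replaces the paper's detour through the multiplication rule and the bound of conditional probabilities by one, which is a cosmetic rather than substantive difference.
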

\begin{proof}
Inserting the definitions of $P_f$ and $P_{f,h}$ given in~\eqref{probability of failure} and~\eqref{approx probability of failure} in the left hand side of~\eqref{approx error} we get
\begin{align}
\vert P_f - P_{f,h} \vert &= \left\lvert \int_{\mathbb{R}^n}\left( I(G(u)\le 0) - I(G_h(u)\le 0)\right) \varphi_n(u) \mathrm{d}u \right\rvert\notag
\\ & = \left\lvert \int_{\mathbb{R}^n} \left(I(G(u)\le 0 \wedge G_h(u)> 0) - I(G(u)> 0\wedge G_h(u)\le 0)\right)\varphi_n(u) \mathrm{d}u \right\rvert\notag
\\ &\le \mathbb{P}[\{G(U)\le 0\} \cap \{G_h(U)>0\}] + \mathbb{P}[\{G(U)>0\} \cap \{G_h(U)\le 0\}]\label{approx error inequality}
\\&=\mathbb{P}[A\triangle A_h],\notag
\end{align} 
where~\eqref{approx error inequality} follows from the triangle inequality. Using Assumption~\ref{Ass LSF error}, we know that the case $(G(u)\le 0 \wedge G_h(u)> 0)$ only occurs if $G(u) \in ]-C_{\mathrm{FE}}h^s, 0] \cap G_h(u) \in ]0,C_{\mathrm{FE}}h^s]$. Similarly, the case $(G_h(u)\le 0 \wedge G(u)> 0)$ only occurs if $G_h(u) \in ]-C_{\mathrm{FE}}h^s, 0] \text{ and } G(u) \in ]0,C_{\mathrm{FE}}h^s]$. Therefore, the absolute approximation error is bounded by 
\begin{align*}
\vert P_f - P_{f,h} \vert \le &\mathbb{P}[G(U)\in ]-C_{\mathrm{FE}}h^s,0] \cap G_h(U)\in]0,C_{\mathrm{FE}}h^s]]
\\&+\mathbb{P}[G_h(U)\in ]-C_{\mathrm{FE}}h^s,0] \cap G(U)\in]0,C_{\mathrm{FE}}h^s]].
\end{align*}
Applying the multiplication rule $\mathbb{P}[B_1 \cap B_2] = \mathbb{P}[B_1\mid B_2] \mathbb{P}[B_2]$ for $B_1,B_2\in\mathcal{A}$ we get that
\begin{align*}
\vert P_f - P_{f,h} \vert &\le \mathbb{P}[G_h(U) \in ]0,C_{\mathrm{FE}}h^s] \mid G(U)\in ]-C_{\mathrm{FE}}h^s, 0]]\cdot \mathbb{P}[G(U)\in ]-C_{\mathrm{FE}}h^s, 0]]
\\ &\hspace{0.5cm} +\mathbb{P}[G_h(U) \in ]-C_{\mathrm{FE}}h^s, 0] \mid G(U)\in ]0, C_{\mathrm{FE}}h^s]]\cdot \mathbb{P}[G(U)\in ]0, C_{\mathrm{FE}}h^s]]
\\&\le \mathbb{P}\left\lbrack G(U)\in]-C_{\mathrm{FE}}h^s,C_{\mathrm{FE}}h^s]\right\rbrack,
\end{align*}
where the last step follows from the fact that probabilities are always bounded by one. This proves inequality~\eqref{approx error}. To prove~\eqref{approx error lipschitz}, we use the assumption on the local Lipschitz continuity of $\mathbb{P}[G(U) \leq \cdot]$
\begin{align*}
\mathbb{P}[G(U)\in]-C_{\mathrm{FE}}h^s,C_{\mathrm{FE}}h^s]]\le 2C_LC_{\mathrm{FE}}h^s.
\end{align*}
\end{proof}
\begin{remark}
By switching the roles of $G$ by $G_h$ in Lemma~\ref{lemma absolute error}, we obtain
\begin{align*}
\vert P_f - P_{f,h}\vert \le \mathbb{P}[G_h(U)\in ]-C_{\mathrm{FE}}h^s,C_{\mathrm{FE}}h^s]]\le 2 C_{L,h} C_{\mathrm{FE}} h^s.
\end{align*}
\end{remark}

\section{Affine linear limit-state function}\label{Sec linear lsf}
The next step of the proofs of Proposition~\ref{proposition} and Theorem~\ref{Thm error bound} is (P2). Here, we need to find an upper bound for the local Lipschitz constant $C_{L}$ of Assumption~\ref{Ass lipschitz} around the limit-state surface $\partial A$ for the case where $G$ is affine linear with respect to $U$. 

\begin{assumption}\label{Ass linear}
The LSF $G:\mathbb{R}^n\rightarrow\mathbb{R}$ is affine linear in the Gaussian random variable $U$, i.e., $G(U) = \alpha^TU +\beta$ where $\alpha\in\mathbb{R}^n$, $\beta > 0$ and $U\sim \mathrm{N}(0,\mathrm{Id}_n)$. Therefore, the probability of failure is
\begin{align*}
P_f = \mathbb{P}[G(U)\le 0] = \mathbb{P}[\alpha^TU\le -\beta] =\int_{-\infty}^{-\beta} \frac{1}{\sqrt{2\pi\Vert \alpha\Vert_2^2}} \exp\left(-\frac{u^2}{2\Vert \alpha \Vert_2^2}\right)\mathrm{d}u = F_W(-\beta),
\end{align*}
where $W:=\alpha^TU$ is distributed according to $\mathrm{N}(0,\Vert \alpha \Vert_2^2)$ and $F_W(\cdot)$ is the CDF of $W$.
\end{assumption}
The shift parameter $\beta$ is assumed to be positive which yields that $G(0)>0$. Moreover, we require that $-\beta +C_{\mathrm{FE}}h^s<0$ since the lower and upper bounds in~\eqref{CDF bound 1} are only defined for negative inputs and we evaluate these bounds at $w=-\beta +C_{\mathrm{FE}}h^s$. Together with Assumptions~\ref{Ass LSF error},~\ref{Ass lipschitz} and~\ref{Ass linear}, we prove statement (P2).

\begin{theorem}\label{relative error linear case}
Let Assumptions~\ref{Ass LSF error},~\ref{Ass lipschitz} and~\ref{Ass linear} hold and $-\beta +C_{\mathrm{FE}}h^s<0$. Then the relative approximation error of the probability of failure is bounded by
\begin{align*}
\frac{\vert P_f - P_{f,h}\vert}{P_f} \le C_2(\beta,\sigma,h^s,C_{\mathrm{FE}})\cdot h^s,
\end{align*}
where $\sigma^2 = \Vert \alpha \Vert_2^2$. 
\end{theorem}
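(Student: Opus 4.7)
The plan is to apply Lemma~\ref{lemma absolute error} to reduce the claim to bounding a Gaussian probability of a short interval around $-\beta$, and then to invoke Mills-ratio tail estimates to extract the factor $P_f$ from the denominator. Since $G(U) = \alpha^T U + \beta$, the random variable $W := \alpha^T U$ is centered Gaussian with variance $\sigma^2 = \Vert\alpha\Vert_2^2$, and Lemma~\ref{lemma absolute error} immediately gives
\[
|P_f - P_{f,h}| \le \mathbb{P}[G(U) \in \,]-C_{\mathrm{FE}}h^s,\, C_{\mathrm{FE}}h^s]] = F_W(-\beta + C_{\mathrm{FE}}h^s) - F_W(-\beta - C_{\mathrm{FE}}h^s).
\]

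I would then upper-bound this difference by $2 C_{\mathrm{FE}}h^s \cdot f_W(\xi)$ for some $\xi$ in the interval, via the mean value theorem. Under the standing hypothesis $-\beta + C_{\mathrm{FE}}h^s < 0$, the whole interval lies on the negative half-line, where $f_W$ is monotone increasing, so $f_W(\xi) \le f_W(-\beta + C_{\mathrm{FE}}h^s)$, which is proportional to $\exp\bigl(-(\beta - C_{\mathrm{FE}}h^s)^2/(2\sigma^2)\bigr)$. For the denominator, I would invoke the standard Gaussian tail (Mills-ratio) lower bound
\[
\Phi(-x) \ge \frac{x}{1+x^2}\,\varphi(x), \qquad x > 0,
\]
at $x = \beta/\sigma$, so that $P_f = \Phi(-\beta/\sigma)$ is bounded below by a constant multiple of $\exp(-\beta^2/(2\sigma^2))$.

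Taking the ratio then cancels the two Gaussian exponentials up to a residual $\exp\bigl(C_{\mathrm{FE}}h^s(2\beta - C_{\mathrm{FE}}h^s)/(2\sigma^2)\bigr)$, which stays $\mathcal{O}(1)$ for $h$ small. The remaining algebraic prefactor has the form $\frac{2 C_{\mathrm{FE}}(\sigma^2 + \beta^2)}{\beta\sigma^2}\, h^s$, and collecting these pieces defines the constant $C_2(\beta, \sigma, h^s, C_{\mathrm{FE}})$ and establishes the claimed bound with the required factor $h^s$.

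The only step I expect to require care is the Mills-ratio inequality itself: it is tight in the rare-event regime $\beta/\sigma$ large, which is exactly the setting of interest, but it degrades as $\beta \to 0$, as reflected in the factor $(\sigma^2+\beta^2)/\beta$. Since this result is ultimately invoked (in step (P5) of the outline) after linearizing $G_h$ at $u_h^{\mathrm{MLFP}}$, where $\beta = \Vert u_h^{\mathrm{MLFP}}\Vert_2 > 0$ is guaranteed by $G_h(0) > 0$, this behaviour does not obstruct the downstream use of the theorem. Notably, convexity of the failure domain plays no role here; the linearity of $G$ has already reduced the problem to a one-dimensional Gaussian tail estimate.
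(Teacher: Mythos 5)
Your proposal is correct and follows essentially the same route as the paper: reduce via Lemma~\ref{lemma absolute error} to the Gaussian measure of the interval $]-\beta-C_{\mathrm{FE}}h^s,-\beta+C_{\mathrm{FE}}h^s]$, bound the density there using monotonicity on the negative half-line, and cancel the exponential against a Mills-ratio (Gordon-type) lower bound for $P_f=F_W(-\beta)$, leaving exactly the residual factor $\exp\bigl(C_{\mathrm{FE}}h^s(2\beta-C_{\mathrm{FE}}h^s)/(2\sigma^2)\bigr)$ that appears in the paper's constant $C_{2,2}$. The only difference is organizational: the paper splits the interval into the one-sided half $[-\beta-C_{\mathrm{FE}}h^s,-\beta]$ and the remainder to record a sharper constant $C_{2,1}$ for one-sided approximations, whereas you bound the density at the right endpoint over the whole interval, which yields an equivalent constant of the same form.
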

By Assumption~\ref{Ass linear}, the probability of failure is directly given in terms of the CDF $F_W$. The goal is to derive an upper bound for the local Lipschitz constant $C_{L}$ in Assumption~\ref{Ass lipschitz}, which depends linearly on the probability of failure. By Lemma~\ref{lemma absolute error}, this is equivalent to deriving an upper bound for the local Lipschitz constant of $F_W(\cdot)$ in the interval $]-\beta -C_{\mathrm{FE}}h^s, -\beta + C_{\mathrm{FE}}h^s]$. We distinguish two cases in the proof. First, we assume that the approximation $G_h$ is one-sided, i.e., $G(U)\le G_h(U)$ almost surely. Secondly, we consider the non-one-sided case. 

\subsection{One-sided approximation}
\begin{assumption}\label{assumption nestedness}
The approximation $G_h$ of the LSF is one-sided with respect to the exact LSF $G$, that means
\begin{align*}
G(u) \le G_h(u),
\end{align*}
for all $u\in\mathbb{R}^n$ and $h >0$.
\end{assumption}
Under Assumption~\ref{assumption nestedness}, it follows that $P_{f,h}\le P_f$ and $\mathbb{P}[G_h(U) \in ]-C_{\mathrm{FE}}h^s, 0] \mid G(U)\in ]0, C_{\mathrm{FE}}h^s]]=0$. Hence, the bound for the absolute approximation error in Lemma~\ref{lemma absolute error} simplifies to
\begin{align*}
\vert P_f - P_{f,h}\vert \le \mathbb{P}[G(U)\in ]-C_{\mathrm{FE}}h^s, 0]]\le C_L C_{\mathrm{FE}} h^s.
\end{align*}
Therefore, it suffices to derive an upper bound for the local Lipschitz constant of $F_W(\cdot)$ within the interval $]-\beta - C_{\mathrm{FE}}h^s, -\beta]$. Observe that the derivative of the CDF $F_W(w)$ with respect to $w$ is the PDF of the normal distribution with mean $0$ and variance $\sigma^2$, denoted by $\varphi_W(w)=\exp\left(-w^2/(2\sigma^2)\right)/\sqrt{2\pi\sigma^2}$, which is strictly increasing for $w\in]-\infty, 0[$. Therefore, the local Lipschitz constant of $F_W$ on the interval $]-\beta-C_{\mathrm{FE}}h^s,-\beta]$ is given by $\varphi_W(-\beta)$. The goal is to derive an upper bound for $C_{L}$ in the form $C_{L}C_{\mathrm{FE}}\le C_2 P_f$ for $C_2>0$. In order to derive this result, we consider the following bounds for the CDF. 
\begin{proposition}
An upper and lower bound for the CDF $F_W(\cdot)$ on the interval $w\in]-\infty,0[$ are given by
\begin{align}
F_l(w) := -\frac{\sigma\exp\left(-\frac{w^2}{2\sigma^2}\right)}{\sqrt{2\pi}}\frac{w}{w^2+1} \le F_W(w) \le -\frac{\sigma\exp\left(-\frac{w^2}{2\sigma^2}\right)}{w\sqrt{2\pi}} =:F_u(w).\label{CDF bound 1}
\end{align}
The derivation of these bounds is given in \cite{Gordon1941}.
\end{proposition}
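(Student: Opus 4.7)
The plan is to combine Lemma~\ref{lemma absolute error} with an explicit upper bound for the local Lipschitz constant of the Gaussian CDF $F_W$, where $W=\alpha^{T}U\sim \mathrm{N}(0,\sigma^2)$. Under Assumption~\ref{Ass linear}, the event $\{G(U)\le 0\}$ equals $\{W\le -\beta\}$, and the event $\{G(U)\in ]-C_{\mathrm{FE}}h^s,C_{\mathrm{FE}}h^s]\}$ translates into $\{W\in ]-\beta-C_{\mathrm{FE}}h^s,-\beta+C_{\mathrm{FE}}h^s]\}$. Lemma~\ref{lemma absolute error} therefore reduces the task to bounding the probability mass of $W$ on a symmetric interval of width $2C_{\mathrm{FE}}h^s$ around $-\beta$ by a constant multiple of $P_f=F_W(-\beta)$ times $h^s$.

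First I would dispatch the one-sided case of Assumption~\ref{assumption nestedness}. There the simplified bound $|P_f-P_{f,h}|\le \mathbb{P}[G(U)\in]-C_{\mathrm{FE}}h^s,0]]$ becomes $\int_{-\beta-C_{\mathrm{FE}}h^s}^{-\beta}\varphi_W(w)\,\mathrm{d}w$. Since the density $\varphi_W$ is strictly increasing on $]-\infty,0[$, its maximum on this interval is attained at the right endpoint $-\beta$, giving $|P_f-P_{f,h}|\le C_{\mathrm{FE}}h^s\,\varphi_W(-\beta)$. For the general case, the same monotonicity argument applied to the symmetric interval $]-\beta-C_{\mathrm{FE}}h^s,-\beta+C_{\mathrm{FE}}h^s]$ produces $|P_f-P_{f,h}|\le 2C_{\mathrm{FE}}h^s\,\varphi_W(-\beta+C_{\mathrm{FE}}h^s)$. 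Here is precisely where the hypothesis $-\beta+C_{\mathrm{FE}}h^s<0$ enters: it guarantees the right endpoint still lies in the region where $\varphi_W$ is monotone, so the maximum is attained there, and it also keeps us in the regime where the Gordon bounds~\eqref{CDF bound 1} apply.

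Next, I would divide by $P_f=F_W(-\beta)$ and control the ratio $\varphi_W(-\beta+C_{\mathrm{FE}}h^s)/F_W(-\beta)$ using the Gordon lower bound
\[
F_W(-\beta)\ \ge\ F_l(-\beta)\ =\ \frac{\sigma}{\sqrt{2\pi}}\exp\!\left(-\frac{\beta^2}{2\sigma^2}\right)\frac{\beta}{\beta^2+1}.
\]
Inserting the explicit formula $\varphi_W(w)=(2\pi\sigma^2)^{-1/2}\exp(-w^2/(2\sigma^2))$ causes the Gaussian normalisation constants to cancel and leaves a ratio of the form $\sigma^{-2}(\beta^2+1)\beta^{-1}\exp\!\left((2\beta C_{\mathrm{FE}}h^s-(C_{\mathrm{FE}}h^s)^2)/(2\sigma^2)\right)$. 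Multiplying by the prefactor $2C_{\mathrm{FE}}$ from the Lipschitz step and absorbing everything into one constant delivers the claimed $C_2(\beta,\sigma,h^s,C_{\mathrm{FE}})$, which is finite provided $\beta>0$ and $h$ is small enough to satisfy the standing hypothesis.

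The main obstacle is that the naive Lipschitz constant $\varphi_W(-\beta)$ is small in absolute terms but still much larger than $P_f$ itself, so no free factor of $P_f$ appears from differentiating $F_W$. The Gordon lower bound $F_l$ is what converts $\varphi_W(-\beta)$ into a closed-form multiple of $F_W(-\beta)$; without it one would only obtain an absolute bound of order $h^s$, not a relative bound. Care is also needed because the ratio $(\beta^2+1)/\beta$ blows up as $\beta\downarrow 0$, but since $\beta>0$ is a standing assumption and we are in the rare-event regime where $\beta$ is bounded away from zero, this poses no obstruction.
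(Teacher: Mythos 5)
Your proposal does not prove the statement under review. The proposition in question is the pair of Mills-ratio inequalities~\eqref{CDF bound 1} for the Gaussian CDF $F_W$ itself, which the paper settles simply by citing \cite{Gordon1941}. What you have written instead is an outline of the \emph{downstream} results — essentially the paper's proofs of Lemma~\ref{Lipschitz bound}, the non-one-sided bound~\eqref{non-monotone bound}, and Theorem~\ref{relative error linear case} — in which you reduce the relative error to the ratio $\varphi_W(-\beta+C_{\mathrm{FE}}h^s)/F_W(-\beta)$ and then control it ``using the Gordon lower bound $F_l$.'' With respect to the statement you were asked to prove, this is circular: the inequality $F_l(w)\le F_W(w)\le F_u(w)$ is invoked as a known fact and never derived. (As a side remark, your sketch of the downstream argument is faithful to the paper's route — monotonicity of $\varphi_W$ on $]-\infty,0[$, evaluation at the right endpoint, then the lower CDF bound to convert $\varphi_W$ into a multiple of $P_f$ — but that is not the deliverable here.)

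For completeness, a self-contained derivation of~\eqref{CDF bound 1} is short. Take $\sigma=1$ and $t=-w>0$, and let $\varphi$, $\Phi$ denote the standard normal PDF and CDF. The upper bound follows from
\begin{align*}
\Phi(-t)=\int_t^\infty \varphi(x)\,\mathrm{d}x \le \int_t^\infty \frac{x}{t}\,\varphi(x)\,\mathrm{d}x = \frac{\varphi(t)}{t},
\end{align*}
since $x/t\ge 1$ on the domain of integration and $\varphi'(x)=-x\varphi(x)$. For the lower bound, set $g(x):=\frac{x}{x^2+1}\varphi(x)$ and compute $-g'(x)=\varphi(x)\,\frac{x^4+2x^2-1}{(x^2+1)^2}\le \varphi(x)$, because $x^4+2x^2-1<(x^2+1)^2$; integrating $-g'$ over $[t,\infty[$ and using $g(x)\to 0$ gives $\frac{t}{t^2+1}\varphi(t)=g(t)\le \Phi(-t)$. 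Note also that the general-$\sigma$ statement follows by the substitution $t=-w/\sigma$, which produces a lower bound proportional to $w/(w^2+\sigma^2)$ rather than $w/(w^2+1)$; the formula for $F_l$ as printed in~\eqref{CDF bound 1} agrees with the scaled Gordon bound only for $\sigma=1$, a discrepancy your attempt could have caught had it engaged with the inequality directly.
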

With these bounds, we derive an upper bound for the local Lipschitz constant $C_{L}$ having the desired form.

\begin{lemma}\label{Lipschitz bound}
Under Assumptions~\ref{Ass lipschitz},~\ref{Ass linear} and~\ref{assumption nestedness}, it holds that the local Lipschitz constant $C_{L}$ within the interval $[-\beta-C_{\mathrm{FE}}h^s,-\beta]$ is bounded by 
\begin{align*}
C_{L} \le \left(\frac{\beta}{\sigma^2}+\frac{1}{\beta}+\frac{1}{\beta\sigma^2}+\frac{1}{\beta^3}\right)P_f.
\end{align*}
\end{lemma}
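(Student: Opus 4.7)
The idea is first to reduce the local Lipschitz constant $C_{L}$ to a single pointwise evaluation of the Gaussian density, and then to convert that density into a constant multiple of $P_{f}$ via the Gordon-type lower bound in~\eqref{CDF bound 1}.

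Under Assumption~\ref{Ass linear}, $W=\alpha^T U\sim\mathrm{N}(0,\sigma^{2})$, so $F_W$ is continuously differentiable with $F_W'(w)=\varphi_W(w)=(\sigma\sqrt{2\pi})^{-1}\exp(-w^{2}/(2\sigma^{2}))$. Because $\beta>0$ together with the standing hypothesis $-\beta+C_{\mathrm{FE}}h^{s}<0$ places the whole interval $[-\beta-C_{\mathrm{FE}}h^{s},-\beta]$ inside $(-\infty,0)$, on which $\varphi_W$ is strictly increasing, the mean value theorem applied to $F_W$ yields
\[
C_{L}=\sup_{w\in[-\beta-C_{\mathrm{FE}}h^{s},-\beta]}\varphi_W(w)=\varphi_W(-\beta).
\]
Evaluating the lower bound in~\eqref{CDF bound 1} at $w=-\beta$ then gives
\[
P_{f}=F_W(-\beta)\ge F_l(-\beta)=\frac{\sigma\beta\,e^{-\beta^{2}/(2\sigma^{2})}}{\sqrt{2\pi}\,(\beta^{2}+1)},
\]
so the exponential factor cancels in the ratio $\varphi_W(-\beta)/P_{f}\le(\beta^{2}+1)/(\sigma^{2}\beta)$.

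To match the stated four-term expression, I would then inflate this estimate by the harmless factor $(\beta^{2}+\sigma^{2})/\beta^{2}\ge 1$ (a bookkeeping choice that is convenient in the subsequent proofs of Proposition~\ref{proposition} and Theorem~\ref{Thm error bound}) and expand
\[
\frac{(\beta^{2}+1)(\beta^{2}+\sigma^{2})}{\sigma^{2}\beta^{3}}=\frac{\beta}{\sigma^{2}}+\frac{1}{\beta}+\frac{1}{\beta\sigma^{2}}+\frac{1}{\beta^{3}},
\]
which, combined with the monotonicity argument of the first paragraph, gives the claimed bound on $C_L$.

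I expect the main obstacle to lie precisely in this last algebraic step. The tight estimate produced directly from the Gordon lower bound contains only two of the four terms (namely $\beta/\sigma^{2}$ and $1/(\beta\sigma^{2})$), whereas the claim additionally demands $1/\beta$ and $1/\beta^{3}$. Identifying the correct intermediate inflation — as above, or via an alternative derivation that produces the factorisation $(\sigma^{-2}+\beta^{-2})(\beta+\beta^{-1})$ naturally — is the only nontrivial part of the argument; once this is settled, the remaining ingredients (monotonicity of $\varphi_W$ on the negative half-line and a single application of~\eqref{CDF bound 1}) are routine.
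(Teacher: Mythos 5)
Your proof is correct, and it takes a mildly but genuinely different route from the paper's. The paper first dominates the density by the derivative of the Gordon \emph{upper} bound, $\varphi_W(-\beta)\le F_u'(-\beta)=\varphi_W(-\beta)\left(1+\sigma^2/\beta^2\right)$, exploits the factorisation $F_u'(-\beta)=F_u(-\beta)\left(\beta/\sigma^2+1/\beta\right)$, and then controls $F_u(-\beta)$ by $P_f$ via the ratio $F_u(-\beta)/F_l(-\beta)=(\beta^2+1)/\beta^2$; multiplying out these two factors produces exactly the four-term constant. You instead compare $\varphi_W(-\beta)$ directly with the Gordon \emph{lower} bound $F_l(-\beta)\le P_f$, which gives the sharper two-term constant $(\beta^2+1)/(\sigma^2\beta)=\beta/\sigma^2+1/(\beta\sigma^2)$, and then recover the stated constant by the deliberate inflation factor $(\beta^2+\sigma^2)/\beta^2\ge 1$ — which is precisely the factor the paper loses at the step $\varphi_W(-\beta)\le F_u'(-\beta)$. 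So the ``obstacle'' you flagged is not a gap at all: your direct estimate strictly implies the lemma, and your bookkeeping identity $(\beta^2+1)(\beta^2+\sigma^2)/(\sigma^2\beta^3)=\beta/\sigma^2+1/\beta+1/(\beta\sigma^2)+1/\beta^3$ is exactly how the paper's constant decomposes. What the paper's detour through $F_u'$ buys is reusability: the monotonicity of $F_u'$ on $(-\infty,0)$ and the evaluation of $F_u'$ at a shifted point are what drive the non-one-sided bound $C_{2,2}$ in~\eqref{non-monotone bound}, whereas your direct argument would have to be redone at $w=-\beta+C_{\mathrm{FE}}h^s$ for that case. What your route buys is a visibly sharper constant and a cleaner explanation of where the extra terms $1/\beta$ and $1/\beta^3$ come from.
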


\begin{proof}
1. The derivative of $F_u$ is given by
\begin{align*}
F_u'(w) = \frac{\exp\left(-\frac{w^2}{2\sigma^2}\right)}{\sqrt{2\pi}\sigma}+\frac{\sigma\exp\left(-\frac{w^2}{2\sigma^2}\right)}{w^2\sqrt{2\pi}} = \frac{\exp\left(-\frac{w^2}{2\sigma^2}\right)}{\sqrt{2\pi}\sigma}\left(1 + \frac{\sigma^2}{w^2}\right).
\end{align*}
Hence, the derivative $F_u'$ is also an upper bound for the PDF $\varphi_W(w)$. Moreover, $F_u'(w)$ is an increasing function for $w<0$ since 
\begin{align*}
F_u''(w) = \frac{-w\exp\left(-\frac{w^2}{2\sigma^2}\right)}{\sqrt{2\pi}\sigma^3} - \frac{\exp\left(-\frac{w^2}{2\sigma^2}\right)}{w\sqrt{2\pi}\sigma} - \frac{2\sigma\exp\left(-\frac{w^2}{2\sigma^2}\right)}{w^3\sqrt{2\pi}}>0, \quad\text{for all } w<0.
\end{align*}
Therefore, the derivative $F_u'$ at $w=-\beta$ gives us an upper bound for the local Lipschitz constant $C_{L}$. 

2. $F_u'$ can be written in terms of $F_u$:
\begin{align}
F_u'(-\beta) = -\frac{\sigma\exp\left(-\frac{\beta^2}{2\sigma^2}\right)}{\beta\sqrt{2\pi}}\left(-\frac{\beta}{\sigma^2} - \frac{1}{\beta}\right)=F_u(-\beta)\left(\frac{\beta}{\sigma^2} + \frac{1}{\beta}\right).\label{eq. 4.1 1}
\end{align}
Since $F_u$ is an upper bound for $F_W$, we know that $F_u(-\beta) \ge F_W(-\beta) = P_f$. Since $F_l$ is a lower bound for $F_W$, we know that $F_l(-\beta) \le P_f$. Combining these two statements, we get
\begin{align}
1\le \frac{F_u(-\beta)}{P_f} \le \frac{F_u(-\beta)}{F_l(-\beta)} = \frac{\beta^2+1}{\beta^2},\label{eq. 4.1 2}
\end{align}
which yields $F_u(-\beta)\le \left(1+1/\beta^2\right)P_f$. 

Given~\eqref{eq. 4.1 1} and~\eqref{eq. 4.1 2}, we conclude that
\begin{align*}
C_{L} = \varphi_W(-\beta) \le F_u'(-\beta) = F_u(-\beta)\left(\frac{\beta}{\sigma^2} + \frac{1}{\beta}\right)\le\left(\frac{\beta}{\sigma^2}+\frac{1}{\beta}+\frac{1}{\beta\sigma^2} + \frac{1}{\beta^3}\right)P_f.
\end{align*}
\end{proof}
Combining the statements of Lemma~\ref{lemma absolute error} and Lemma~\ref{Lipschitz bound}, we conclude the proof of Theorem~\ref{relative error linear case} with the constant 
\begin{align}
C_{2,1}(\beta,\sigma) := \left(\frac{\beta}{\sigma^2}+\frac{1}{\beta}+\frac{1}{\beta\sigma^2}+\frac{1}{\beta^3}\right)C_{\mathrm{FE}}. \label{C_2_1 constant}
\end{align}

\begin{remark}[Sharper bounds]
Applying the bounds of \cite[7.1.13]{Abramowitz64}, we can derive the following sharper upper and lower bounds of $F_W(w)$ for $w\in]-\infty,0]$
\begin{align}
\sqrt{\frac{2}{\pi}}\frac{\exp\left(-\frac{w^2}{2\sigma^2}\right)}{\sqrt{4+w^2/\sigma^2}-w/\sigma} \le F_W(w) \le \sqrt{\frac{2}{\pi}}\frac{\exp\left(-\frac{w^2}{2\sigma^2}\right)}{\sqrt{2+w^2/\sigma^2}-w/\sigma}.\label{CDF bound 2}
\end{align}
Using~\eqref{CDF bound 2}, we derive a sharper bound for the Lipschitz constant $C_{L}$ of the form
\begin{align*}
C_{L}C_{\mathrm{FE}} &\le \left(\frac{\sqrt{4+\beta^2/\sigma^2}+\beta/\sigma}{\sqrt{2+\beta^2/\sigma^2}+\beta/\sigma}\right)\left(\frac{\beta}{\sigma^2}+\frac{\beta/\sqrt{2+\beta^2/\sigma^2}+\sigma}{\sigma^2\sqrt{2+\beta^2/\sigma^2}+\beta\sigma}\right)C_{\mathrm{FE}}P_f
\\&=: \widehat{C}_{2,1}(\beta,\sigma)P_f.
\end{align*}
The proof works in a similar way as the proof of Lemma~\ref{Lipschitz bound}.
\end{remark}
Figure~\ref{Fig Lipschitz bounds} shows the constants $C_{2,1}$ and $\widehat{C}_{2,1}$ for varying $\beta$ and $\sigma^2\in\{0.1, 1.0, 10.0\}$ and $C_{\mathrm{FE}}=1$. The x-axis shows the probability of failure $P_f = F_W(-\beta)$. We observe that the constants are large for small variances $\sigma^2$. Since $F_u(0)$ is not defined and $F_l(0)=0$, the constant $C_{2,1}$ explodes to $+\infty$ for large probability of failures, i.e., $\beta\rightarrow 0$ while $\widehat{C}_{2,1}$ yields a small constant in this case. In both plots, we infer that the variance $\sigma^2$ has the main influence on the behaviour of the constants. The probability of failure has only a small influence. For the FORM estimate, the variance of the linearized LSF is the square of the norm of the gradient at the MLFP. 

\begin{figure}[htbp]
\centering
	\includegraphics[trim=0cm 0cm 0cm 0cm,scale=0.23]{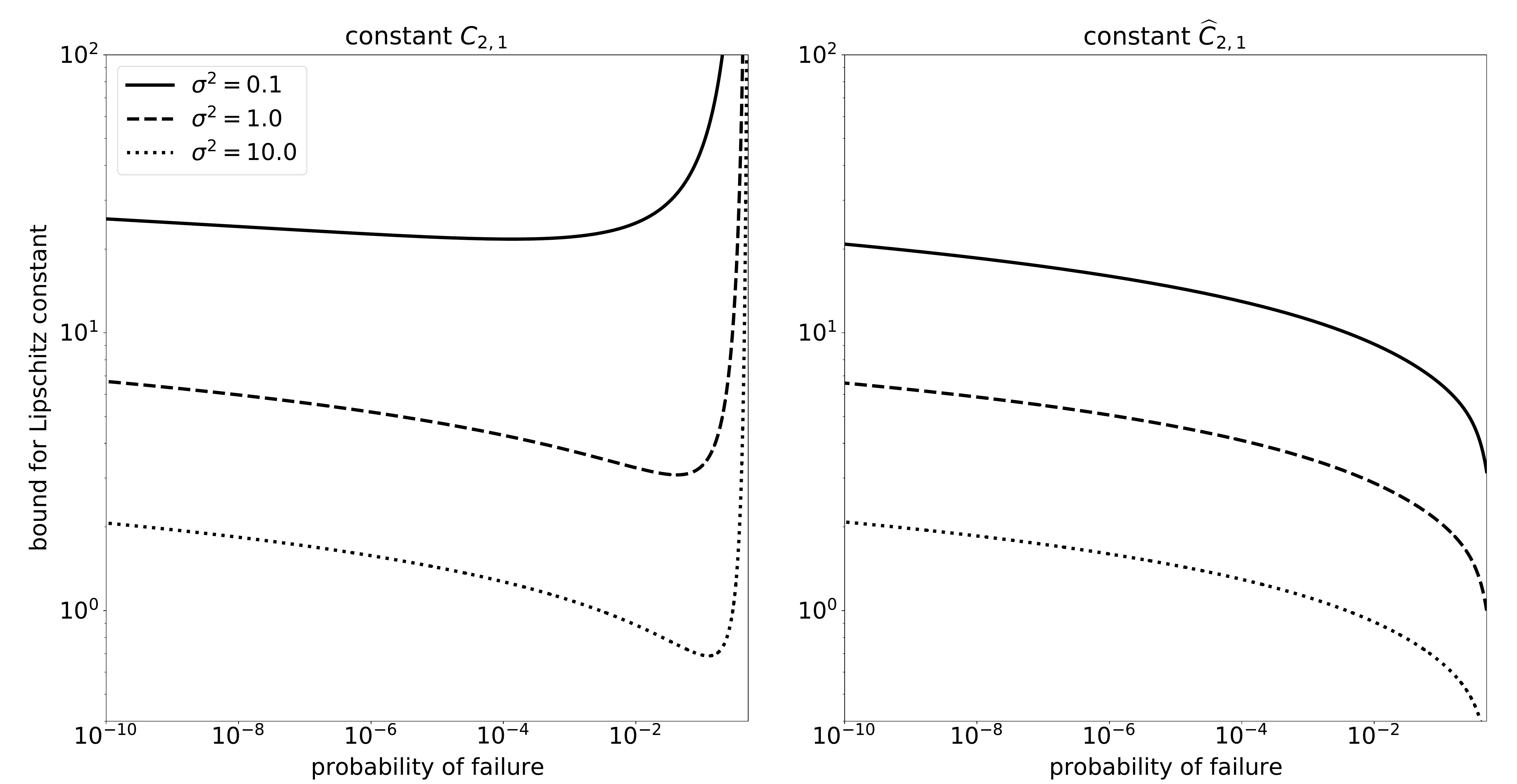}
		\caption{On the left: behaviour of $C_{2,1}$ for varying $\beta$ and $\sigma^2\in\{0.1, 1.0, 10.0\}$, where we assume that $G(U)$ is distributed according to $\mathrm{N}(\beta, \sigma^2)$ and $C_{\mathrm{FE}}=1$. On the right: behaviour of $\widehat{C}_{2,1}$ for varying $\beta$ and $\sigma^2\in\{0.1, 1.0, 10.0\}$. The x-axis shows the respective probability of failure $P_f = F_W(-\beta)$ which lies in the interval $[10^{-10}, 1/2]$.}
		\label{Fig Lipschitz bounds}
\end{figure}

\subsection{Non-one-sided approximation}
In general, we do not know if the LSF is one-sided, i.e., if $G(u)\le G_h(u)$ for all $u\in\mathbb{R}^n$. Hence, we are required to bound the local Lipschitz constant in Assumption~\ref{Ass lipschitz} within the interval $[-\beta-C_{\mathrm{FE}}h^s, -\beta + C_{\mathrm{FE}}h^s]$. Due to the fact that the bounds in~\eqref{CDF bound 1} are not defined for $w\ge0 $, $h$ must be chosen sufficiently small such that $-\beta + C_{\mathrm{FE}}h^s<0$. Since we have already derived an upper bound for the local Lipschitz constant within the first half $[-\beta-C_{\mathrm{FE}}h^s, -\beta]$, we derive an upper bound for the second half $[-\beta, -\beta+C_{\mathrm{FE}}h^s]$. In this case, the local Lipschitz constant can be expressed by the derivative of the CDF $\varphi_W(z)$ at $z=-\beta + C_{\mathrm{FE}}h^s$. Applying the same steps as for the one-sided case, yields the following bound
\begin{align}
C_{L}C_{\mathrm{FE}} &\le F_u'(-\beta+C_{\mathrm{FE}}h^s)C_{\mathrm{FE}}\notag
\\&\le \left(\beta + \frac{1}{\beta}\right)\left(\frac{1}{\sigma^2}+\frac{1}{(\beta-C_{\mathrm{FE}}h^s)^2}\right)\exp\left(\frac{2\beta C_{\mathrm{FE}}h^s-C_{\mathrm{FE}}^2h^{2s}}{2\sigma^2}\right)C_{\mathrm{FE}}P_f \notag
\\ &=: C_{2,2}(\beta,\sigma,h^s,C_{\mathrm{FE}}) P_f.\label{non-monotone bound}
\end{align}
Unfortunately, $C_{2,2}$ depends on the error bound $C_{\mathrm{FE}} h^s$ of the LSF approximation. We observe that $C_{2,2}$ converges to $C_{2,1}$ for $h\rightarrow 0$. We illustrate this in the following.
Figure~\ref{Fig Lipschitz bound non monotone} displays the constant $C_{2,2}/C_{\mathrm{FE}}$ while varying the step size $h$. We divide by $C_{\mathrm{FE}}$ to eliminate the linear dependence in $C_{2,2}$. The order of convergence is either $s=1$ or $s=2$. Moreover, different values for $C_{\mathrm{FE}}$ are considered. The variance is $\sigma^2=1$ and $\beta=4$. This yields a probability of failure of order $10^{-5}$. We observe that the constant $C_{2,2}/C_{\mathrm{FE}}$ is large for large values of $C_{\mathrm{FE}}$. For the convergence order $s=1$, the step size $h$ should be smaller than $10^{-2}$ to ensure a small constant even for large $C_{\mathrm{FE}}$. For $s=2$, the step size should be smaller than $10^{-1}$. 
\begin{figure}[htbp]
\centering
	\includegraphics[trim=0cm 0cm 0cm 0cm,scale=0.23]{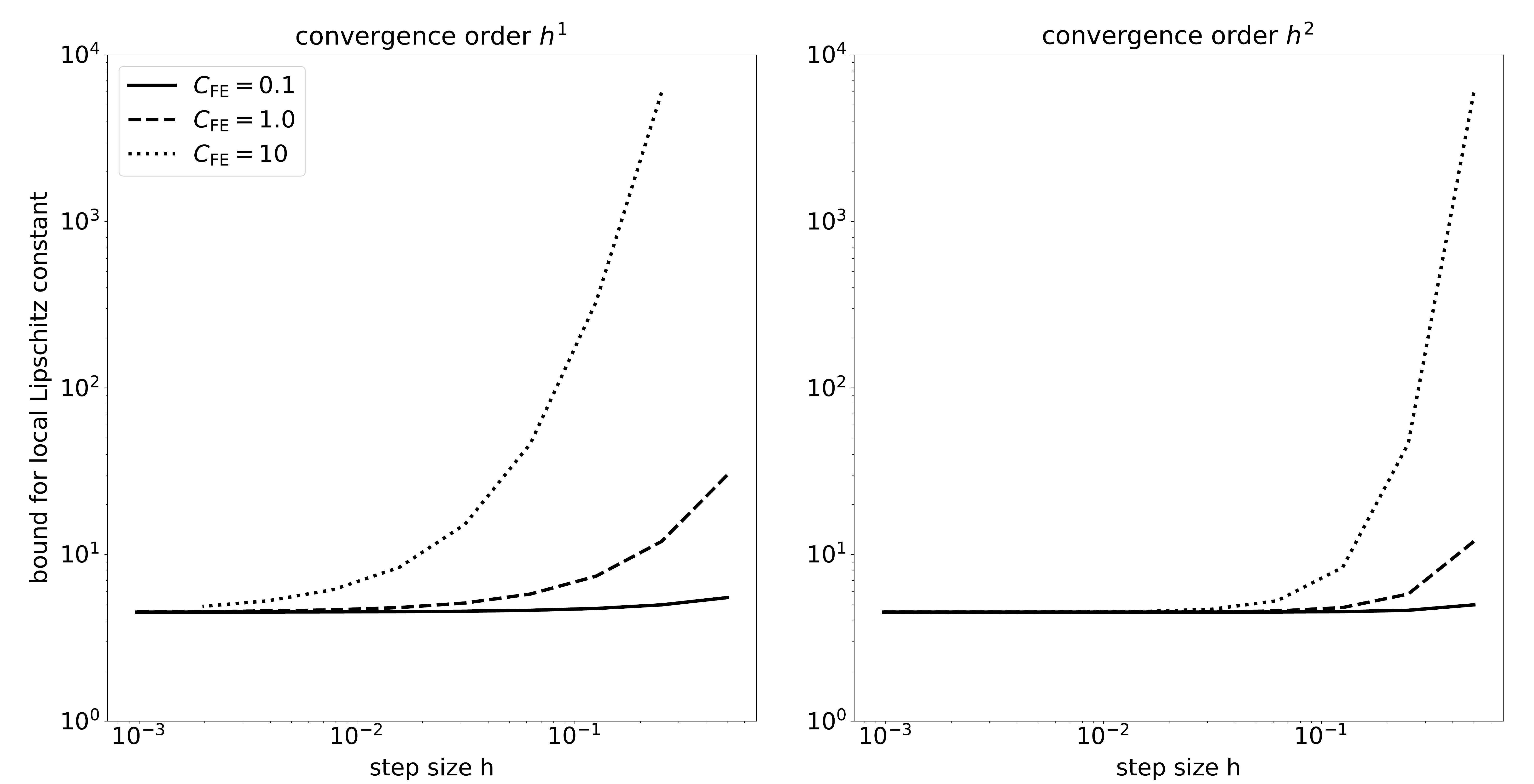}
		\caption{On the left: behaviour of $C_{2,2}/C_{\mathrm{FE}}$ for varying the step size $h$ and $C_{\mathrm{FE}}\in\{0.1, 1.0, 10.0\}$. The convergence order is $s=1$. On the right: behaviour of $C_{2,2}/C_{\mathrm{FE}}$ for varying the step size $h$ and $C_{\mathrm{FE}}\in\{0.1, 1.0, 10.0\}$. The convergence order is $s=2$. In both plots, $\sigma^2=1$ and $\beta=4$.}
		\label{Fig Lipschitz bound non monotone}
\end{figure}
\begin{remark}
Following the same steps to derive the constant $C_{2,2}$, one could also derive a constant $\widehat{C}_{2,2}$ which is based on the sharper CDF bounds~\eqref{CDF bound 2}. This constant also depends on $C_{\mathrm{FE}}h^s$. 
\end{remark}

Finally, we note that the result above allows us to prove Theorem~\ref{relative error linear case}. Combining the constants in~\eqref{C_2_1 constant} and~\eqref{non-monotone bound}, we obtain the asserted inequality with
\begin{align*}
C_2(\beta,\sigma,h^s,C_{\mathrm{FE}}):=C_{2,1}(\beta,\sigma) + C_{2,2}(\beta,\sigma,h^s,C_{\mathrm{FE}}).
\end{align*}

\section{Error Analysis with Optimal Control and FORM}\label{Sec. Opt Control}
In this section, we prove statements (P3)--(P5) which will conclude the proofs of Proposition~\ref{proposition} and Theorem~\ref{Thm error bound}. In the affine linear case, we know that the error of the LSF in Assumption~\ref{Ass LSF error} is directly related to the distance between the exact and approximate limit-state surface. However, this direct relation is not obvious in a more general setting.
\\In the subsequent step, we derive an upper bound for the distance between the exact and approximate limit-state surface $\partial A$ and $\partial A_h$, respectively, in the case where the LSF satisfies the assumptions of Proposition~\ref{proposition} and Theorem~\ref{Thm error bound}. This upper bound is based on results from optimal control theory. Finally, we estimate the Gaussian measure of the symmetric difference of the failure domains $A$ and $A_h$ by the FORM approximation $P_{f,h}^{\mathrm{FORM}}$.

\subsection{Theoretical results from optimal control}
We will formulate item (P3) in Theorem~\ref{Thm. LSF distance}. For the proof of this result, we require several concepts from the theory of optimal control. We discuss those results below; based on the works \cite{Rannacher05, Vexler04}. 

We commence with the unconstrained optimal control problem. It is given by
\begin{align}
\underset{u\in \mathbb{R}^n}{\mathrm{min}}\quad J(u) := \frac{1}{2}\Vert \widehat{G}(u)-\overline{g}\Vert_2^2 + \frac{\alpha}{2}\Vert u-\overline{u}\Vert_2^2,\label{unconstrained}
\end{align}
where $u\in \mathbb{R}^n$ is the unknown parameter and $\widehat{G}:\mathbb{R}^n\rightarrow\mathbb{R}^{n_m}$ is the observation operator. The observation operator $\widehat{G}(u) = \mathcal{B}y(\cdot,u)$ implicitly depends on the solution $y(\cdot,u)\in V=H_0^1(D)$ of the weak formulation of an elliptic PDE
\begin{align}
\int_D a(x,u) \nabla_x y(x,u)\cdot\nabla_x v(x)\mathrm{d}x = \int_D f(x) v(x)\mathrm{d}x\quad\forall v\in V,\label{weak form opt control}
\end{align} 
where $u\in \mathbb{R}^n$ is a fixed parameter, $f\in L^2(D)$, and $D\subset\mathbb{R}^d$, $d=1,2,3$, is an open, bounded, convex polygonal domain. The function $a(x,\cdot):\mathbb{R}^n\rightarrow\mathbb{R}$ is assumed to be three times continuously differentiable for all $x\in D$ and $a(\cdot, u)\in L^{\infty}(D)$ for all $u\in\mathbb{R}^n$. The discretized unconstrained optimal control problem is denoted as
\begin{align}
\underset{u\in \mathbb{R}^n}{\mathrm{min}}\quad J_h(u) := \frac{1}{2}\Vert \widehat{G}_h(u)-\overline{g}\Vert_2^2 + \frac{\alpha}{2}\Vert u-\overline{u}\Vert_2^2,\label{discrete opt control problem}
\end{align}
where $\widehat{G}_h(u) = \mathcal{B}_hy_h(\cdot,u)$ is the discretized observation operator. Additionally, we assume that $\mathcal{B}$ and $\mathcal{B}_h$ are linear and bounded with respect to $y$ and $y_h$, respectively. The parameters $\alpha\ge 0$ and $\overline{u}$ are regularizing parameters, while $\overline{g}\in\mathbb{R}^{n_m}$ is a given vector of, e.g., measurements. 

In the following, we derive the necessary and sufficient optimality conditions for~\eqref{unconstrained}. The \emph{first-order necessary optimality condition} is given as 
\begin{align*}
\nabla_u J(u) = 0 \iff (\mathcal{D}\widehat{G}(u))^T \widehat{G}(u)+\alpha u = (\mathcal{D}\widehat{G}(u))^T\overline{g}+\alpha\overline{u},
\end{align*}
where $\mathcal{D}\widehat{G}$ is the Jacobian of the observation operator $\widehat{G}$ with respect to $u$. 
The \emph{second-order optimality condition} is satisfied, if some \emph{coercivity parameter} $\gamma > 0$ exists, with
\begin{align*}
z^T \nabla_u^2 J(u^*) z\ge \gamma\Vert z\Vert_2^2\quad \forall z\in\mathbb{R}^n,
\end{align*}
where $\nabla_u^2 J(u^*)$ denotes the Hessian of $J$ with respect to $u$. 
We call $u^*\in \mathbb{R}^n$ a \emph{stable solution of}~\eqref{unconstrained}, if it satisfies both, the first and second-order optimality conditions.

\begin{theorem}\label{Thm opt control a priori}
Let Assumption~\ref{regularity of a}, ~\ref{Ass gaussian} (ii) and~\ref{Ass LSF error} hold for the weak formulation~\eqref{weak form opt control} and the operator $\mathcal{B}$. Let $u^*\in \mathbb{R}^n$ be a stable solution of~\eqref{unconstrained} such that $\nabla_u^2 J(u)$ is coercive with parameter $\gamma>0$. Then for $h>0$ sufficiently small, there exists a stable solution $u_h^*\in\mathbb{R}^n$ of the discrete problem~\eqref{discrete opt control problem} such that the following a priori error estimate holds
\begin{align*}
\Vert u^* - u_h^*\Vert_2 \le \frac{1}{\gamma} \Vert \nabla_u J(u^*)- \nabla_u J_h(u^*)\Vert_2.
\end{align*} 
\end{theorem}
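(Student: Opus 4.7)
The plan is to exploit the standard ``coercivity plus vanishing gradient'' argument that underlies a priori estimates in optimal control. The starting observation is that the first-order necessary condition holds at any stable solution, so $\nabla_u J(u^*) = 0$ and, once existence of $u_h^*$ is established, $\nabla_u J_h(u_h^*) = 0$. The goal will be to combine these two identities with a Taylor/mean-value representation of the difference $\nabla_u J_h(u^*)-\nabla_u J_h(u_h^*)$ involving the Hessian, and then invoke coercivity.

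First I would establish the existence of a stable $u_h^*$ near $u^*$. Under Assumption~\ref{regularity of a}, Assumption~\ref{Ass gaussian}~(ii), and Assumption~\ref{Ass LSF error} applied to $\mathcal{B}$ and $\mathcal{B}_h$, one shows that both $\widehat{G}_h$ and $\mathcal{D}\widehat{G}_h$ converge uniformly on compact sets to $\widehat{G}$ and $\mathcal{D}\widehat{G}$ as $h\to 0$, with an error of order $h^s$ (standard FEM sensitivity results; see \cite{Vexler04}). This transfers to $\nabla_u J_h\to\nabla_u J$ and $\nabla_u^2 J_h\to\nabla_u^2 J$ uniformly on a neighbourhood $B_\delta(u^*)$. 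Since $\nabla_u^2 J(u^*)$ is $\gamma$-coercive, for $h$ sufficiently small $\nabla_u^2 J_h(u)$ is $\widetilde\gamma$-coercive for every $u\in B_\delta(u^*)$ with $\widetilde\gamma$ arbitrarily close to $\gamma$. Strong local convexity of $J_h$ then yields a unique local minimiser $u_h^*\in B_\delta(u^*)$ via a Banach fixed-point / inverse-function argument applied to the map $u\mapsto u-(\nabla_u^2 J_h(u^*))^{-1}\nabla_u J_h(u)$; this $u_h^*$ is stable by construction.

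With $u_h^*$ in hand, I would write the fundamental theorem of calculus along the segment joining $u_h^*$ and $u^*$,
\begin{equation*}
\nabla_u J_h(u^*) - \nabla_u J_h(u_h^*) = \left(\int_0^1 \nabla_u^2 J_h\bigl(u_h^* + t(u^*-u_h^*)\bigr)\,\mathrm{d}t\right)(u^*-u_h^*),
\end{equation*}
and take the inner product with $u^*-u_h^*$. Using coercivity of the averaged Hessian (which, by the uniform convergence above and shrinking $h$ further, can be taken $\ge \gamma$) and the fact that $\nabla_u J_h(u_h^*)=0$, one gets
\begin{equation*}
\gamma\,\|u^*-u_h^*\|_2^2 \le (u^*-u_h^*)^T\nabla_u J_h(u^*).
\end{equation*}
Finally, subtracting the vanishing quantity $(u^*-u_h^*)^T\nabla_u J(u^*)=0$ on the right and applying Cauchy--Schwarz yields
\begin{equation*}
\gamma\,\|u^*-u_h^*\|_2^2 \le \|u^*-u_h^*\|_2\,\bigl\|\nabla_u J(u^*)-\nabla_u J_h(u^*)\bigr\|_2,
\end{equation*}
from which the claimed estimate follows after dividing by $\|u^*-u_h^*\|_2$ (handling the trivial case $u^*=u_h^*$ separately).

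The main obstacle I foresee is the existence/localisation step: one must verify that the approximation property~\eqref{ass uniform bound} of $y$ versus $y_h$, together with three-times differentiability of $a$ in $u$, really does transfer to locally uniform convergence of $\nabla_u J_h$ and $\nabla_u^2 J_h$. This requires differentiating the state equation~\eqref{weak form opt control} with respect to $u$ and estimating the discretisation error for the resulting tangent and adjoint problems; these are technical but standard FEM sensitivity estimates. Once that ingredient is secured, both the preservation of coercivity on $B_\delta(u^*)$ for $h$ small and the final Cauchy--Schwarz estimate are routine.
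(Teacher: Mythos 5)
Your argument is correct and is essentially the standard proof that the paper itself delegates entirely to the citation \cite[Theorem 3.4.1 and 3.4.2]{Vexler04}: existence of a nearby stable discrete minimiser via locally uniform convergence of $\nabla_u J_h$ and $\nabla_u^2 J_h$, then the mean-value representation of $\nabla_u J_h(u^*)-\nabla_u J_h(u_h^*)$ via the averaged Hessian, its coercivity, the vanishing of $\nabla_u J(u^*)$ and $\nabla_u J_h(u_h^*)$, and Cauchy--Schwarz. The only caveat --- which you correctly flag yourself --- is that Assumption~\ref{Ass LSF error} bounds only $\vert \mathcal{F}y-\mathcal{F}_h y_h\vert$ and not its parametric derivatives, so the locally uniform convergence of $\mathcal{D}\widehat{G}_h$ and of the Hessians must be supplied by separate FEM sensitivity estimates (and the coercivity constant of the averaged discrete Hessian is, strictly, only arbitrarily close to $\gamma$ for small $h$), exactly as handled in the cited reference.
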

\begin{proof}
The proof is given in \cite[Theorem 3.4.1 and 3.4.2]{Vexler04}.
\end{proof}

\subsection{Back to rare event estimation}
We now apply Theorem~\ref{Thm opt control a priori} to prove statement (P3).

\begin{theorem}\label{Thm. LSF distance}
Let Assumptions~\ref{regularity of a},~\ref{Ass gaussian} (ii),~\ref{Ass LSF error} and~\ref{Assumption Gradient LSF} hold. Then for all $u\in\partial A$ it holds that
\begin{align*}
\mathrm{dist}(u, \partial A_h) \le C_3(h)\cdot h^s,
\end{align*}
for $h>0$ sufficiently small.
\end{theorem}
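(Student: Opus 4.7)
The plan is to cast the construction of a nearby point in $\partial A_h$ as an unconstrained optimal control problem of the type~\eqref{unconstrained} for which Theorem~\ref{Thm opt control a priori} applies, and then to convert the resulting a priori estimate into a true geometric distance to $\partial A_h$. Fix an arbitrary $u_*\in\partial A$ and a parameter $\alpha>0$, and introduce the functionals
\begin{align*}
J(v):=\frac{1}{2}\bigl(G(v)\bigr)^2+\frac{\alpha}{2}\Vert v-u_*\Vert_2^2,\qquad J_h(v):=\frac{1}{2}\bigl(G_h(v)\bigr)^2+\frac{\alpha}{2}\Vert v-u_*\Vert_2^2,
\end{align*}
which match~\eqref{unconstrained} with $\bar g=y_{\max}$ and $\bar u=u_*$. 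Using $G(u_*)=0$, a direct computation gives $\nabla J(u_*)=0$ and $\nabla^2 J(u_*)=\nabla G(u_*)\nabla G(u_*)^{\top}+\alpha\,\mathrm{Id}_n\succeq\alpha\,\mathrm{Id}_n$, so $u_*$ is a stable solution of $\min J$ with coercivity constant $\gamma=\alpha$.

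Theorem~\ref{Thm opt control a priori} then produces, for $h$ sufficiently small, a stable minimizer $v_h^*$ of $J_h$ together with the a priori estimate
\begin{align*}
\Vert u_*-v_h^*\Vert_2 \le \frac{1}{\alpha}\Vert\nabla J(u_*)-\nabla J_h(u_*)\Vert_2 = \frac{|G_h(u_*)|\,\Vert\nabla G_h(u_*)\Vert_2}{\alpha}.
\end{align*}
Since $G(u_*)=0$, Assumption~\ref{Ass LSF error} yields $|G_h(u_*)|\le C_{\mathrm{FE}}h^s$, so that $\Vert u_*-v_h^*\Vert_2=\mathcal{O}(h^s)$. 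The remaining difficulty is that $v_h^*$ is only a ``soft projection'' and need not lie on $\partial A_h$. I would therefore exploit the first-order condition $\nabla J_h(v_h^*)=0$, which reads
\begin{align*}
v_h^*-u_*=-\frac{G_h(v_h^*)}{\alpha}\,\nabla G_h(v_h^*),
\end{align*}
and combine it with the previous bound to conclude $|G_h(v_h^*)|=\mathcal{O}(h^s)$. Assumption~\ref{Assumption Gradient LSF} guarantees that $\nabla G_h(v_h^*)\neq 0$ for $h$ small enough (by continuity from the nonvanishing of $\nabla G_h$ along $\partial A$). Applying the one-dimensional implicit function theorem to $t\mapsto G_h\!\bigl(v_h^*+t\,\nabla G_h(v_h^*)\bigr)$, whose derivative at $t=0$ equals $\Vert\nabla G_h(v_h^*)\Vert_2^2>0$, furnishes a point $u_h\in\partial A_h$ within distance $|G_h(v_h^*)|/\Vert\nabla G_h(v_h^*)\Vert_2+\mathcal{O}(h^{2s})$ of $v_h^*$. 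The triangle inequality then gives $\mathrm{dist}(u_*,\partial A_h)\le\Vert u_*-v_h^*\Vert_2+\Vert v_h^*-u_h\Vert_2\le C_3(h)\,h^s$.

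The principal obstacle I anticipate is the bookkeeping of $C_3(h)$. Although the parameter $\alpha$ formally cancels between the soft-projection bound and the Newton-type correction, $C_3(h)$ retains a dependence on $\Vert\nabla G_h(u_*)\Vert_2$, $\Vert\nabla G_h(v_h^*)\Vert_2^{-1}$, and on the angular non-degeneracy parameter $\nu_h$ of Assumption~\ref{Assumption Gradient LSF}; keeping these under uniform control as $h\to 0$ is where the cosine lower bound $\nu_h$ is essential, since it prevents $\partial A_h$ from approaching $u_*$ tangentially and guarantees that the gradient ray actually crosses $\partial A_h$ transversally. A secondary technical issue is verifying that Assumption~\ref{Ass gaussian}(ii) and Assumption~\ref{regularity of a} supply enough regularity of $G$ and $G_h$ to invoke the Vexler–Rannacher machinery behind Theorem~\ref{Thm opt control a priori}, i.e.\ that $J_h$ is twice continuously differentiable and that $\nabla^2 J_h(u_*)$ is a stable perturbation of $\nabla^2 J(u_*)$; here the $C^1$ dependence of the diffusion coefficient on $u$ and standard elliptic regularity for the dual problem~\eqref{dual problem} should suffice.
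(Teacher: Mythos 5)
Your first half coincides with the paper's proof: the same regularized functional $J_\alpha(v)=\frac{1}{2}G(v)^2+\frac{\alpha}{2}\Vert v-u_*\Vert_2^2$ appears as~\eqref{surface opt control}, $u_*$ is identified as a stable solution, and Theorem~\ref{Thm opt control a priori} produces a discrete stable point $v_h^*$ with $\Vert u_*-v_h^*\Vert_2=\mathcal{O}(h^s)$. Where you diverge is in the passage from the soft projection $v_h^*$ to an actual point of $\partial A_h$. The paper keeps $\alpha$ variable: it restricts the coercivity estimate to the cone of directions making a small angle with $\nabla_u G(u_*)$, which replaces the coercivity constant $\alpha$ by $\cos^2\left(\sphericalangle(u_*-\widehat u_h,\nabla_u G(u_*))\right)\Vert\nabla_u G(u_*)\Vert_2^2+\alpha$, and then sends $\alpha\to 0$; the first-order condition forces $G_h(\widehat u_h)\to 0$, so the soft projections converge to a genuine point $u_h^*\in\partial A_h$, and Assumption~\ref{Assumption Gradient LSF} ($\nu_h$) is exactly what keeps the denominator bounded away from zero in that limit. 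You instead freeze $\alpha$ and append a Newton/implicit-function correction along $\nabla_u G_h(v_h^*)$. That is a legitimate alternative and arguably more elementary (it avoids the cone-restricted rerun of the Vexler argument and the limit $\alpha\to0$), but two of your side remarks are off. First, $\alpha$ does not ``cancel'': the soft-projection bound carries the factor $\Vert\nabla_u G_h(u_*)\Vert_2/\alpha$, which survives into $C_3(h)$; this is harmless for the $\mathcal{O}(h^s)$ statement but yields a constant scaling like $\Vert\nabla_u G_h\Vert_2$ rather than the paper's more natural $1/\Vert\nabla_u G\Vert_2$. Second, in your argument the gradient ray crosses the level set $\{G_h=0\}$ transversally automatically wherever $\nabla_u G_h\neq 0$, so the cosine bound $\nu_h$ plays no role for you; its actual function in the paper is to control the coercivity constant as $\alpha\to 0$, not to enforce transversality of your correction step.

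The one genuine gap is the quantitative implicit-function step. To conclude that a zero of $t\mapsto G_h\left(v_h^*+t\,\nabla_u G_h(v_h^*)\right)$ exists within distance $\vert G_h(v_h^*)\vert/\Vert\nabla_u G_h(v_h^*)\Vert_2+\mathcal{O}(h^{2s})$ of $v_h^*$, you need the derivative of this scalar map to stay bounded below on an interval of that length, i.e.\ a bound on $\nabla_u^2 G_h$ (uniform in $h$ and over $u_*\in\partial A$) together with a uniform lower bound on $\Vert\nabla_u G_h\Vert_2$ in a tube around $\partial A$. Neither is supplied by Assumptions~\ref{Ass gaussian}~(ii) and~\ref{Assumption Gradient LSF} as stated: the latter only controls gradients on $\partial A$ itself, almost everywhere, and gives no quantitative lower bound independent of the point. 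This is repairable (e.g.\ by postulating uniform $C^2$ bounds on $G_h$ near $\partial A$), but as written the step ``furnishes a point $u_h\in\partial A_h$'' is asserted rather than proved; the paper's $\alpha\to 0$ device is precisely how it avoids this second-order analysis.
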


\begin{proof}
We apply Theorem~\ref{Thm opt control a priori} to an appropriate optimal control problem with $\alpha >0$ to show that for $u^*\in\partial A$ exists a $\widehat{u}_h\in\mathbb{R}^n$ such that the distance between these points behaves as $\mathcal{O}(h^s)$. Then, we consider the limit $\alpha \rightarrow 0$.
\\Let $u^*\in\partial A$ be a point on the exact limit-state surface, i.e., $G(u^*)=0$. We investigate the following optimal control problem
\begin{align}
\underset{u\in\mathbb{R}^n}{\mathrm{min}}\quad J_{\alpha}(u):= \frac{1}{2} G(u)^2 + \frac{\alpha}{2}\Vert u-u^*\Vert_2^2,\label{surface opt control}
\end{align}
where we set $\overline{g}=0$ and $\alpha >0$. For $\alpha=0$, there is no stable solution since for all $u\in \partial A$ it holds that $J_{0}(u)=0$. Hence, the second order sufficient condition is violated. For $\alpha >0$, the gradient and Hessian matrix of $J_{\alpha}$ are given as
\begin{align*}
\nabla_u J_{\alpha}(u) &= G(u)\nabla_u G(u) + \alpha(u- u^*),
\\\nabla_u^2 J_{\alpha}(u) &= G(u)\nabla_u^2G(u) + \nabla_u G(u)(\nabla_u G(u))^{T} + \alpha I.
\end{align*}
The point $u^*$ is the unique global minimizer of~\eqref{surface opt control} since $J_{\alpha}(u^*)=0$ and $J_{\alpha}(u)>0$ for all $u\in\mathbb{R}^n\setminus \{u^*\}$. Hence, $u^*$ is a stable solution. For the discretization parameter $h>0$, we define the discretized version of~\eqref{surface opt control} as
\begin{align}
\underset{u\in\mathbb{R}^n}{\mathrm{min}}\quad J_{h,\alpha}(u):= \frac{1}{2} G_h(u)^2 + \frac{\alpha}{2}\Vert u- u^*\Vert_2^2.\label{surface discretized opt control}
\end{align}
By Theorem~\ref{Thm opt control a priori}, there exists a point $\widehat{u}_h$ in a neighborhood of $u^*$ such that $\widehat{u}_h$ is a stable solution of~\eqref{surface discretized opt control}. Thus, we know that $\nabla_u J_{h,\alpha}(\widehat{u}_h) = 0$, which yields
\begin{align*}
G_h(\widehat{u}_h)\nabla_u G_h(\widehat{u}_h) + \alpha(\widehat{u}_h-u^*)=0.
\end{align*}
If $G_h(\widehat{u}_h)=0$ we get that $\widehat{u}_h=u^*$ and the claim is proved. Now, we consider the case $G_h(\widehat{u}_h)\neq 0$ and we denote $u_h^*\in\partial A_h$ as the point on $\partial A_h$ that has minimal distance to the point $u^*$. Moreover, we define the set of points
\begin{align*}
E := \left\lbrace p\in\mathbb{R}^n: \vert\cos\left(\sphericalangle(p, \nabla_u G(u^*))\right)\vert \ge\vert\cos\left(\sphericalangle(u^*-\widehat{u}_h, \nabla_u G(u^*))\right)\vert\right\rbrace,
\end{align*} 
where $\sphericalangle(a, b) := \arccos\left({a^Tb}/{(\Vert a \Vert_2\Vert b\Vert_2)}\right)$, for $a, b \in \mathbb{R}^n$. The set $E$ contains all directions $p\in\mathbb{R}^n$, which admit a smaller or equal angle with $\nabla_u G(u^*)$ than the direction pointing from $u^*$ to $\widehat{u}_h$. For all $p \in E$, we conclude that
\begin{align*}
p^T \nabla_u^2 J_{\alpha}(u^*) p &= p^T \nabla_u G(u^*) (\nabla_u G(u^*))^T p + \alpha \Vert p \Vert_2^2 
\\&\ge \left(\cos^2\left(\sphericalangle(u^*-\widehat{u}_h, \nabla_u G(u^*))\right)\Vert\nabla_u G(u^*)\Vert_2^2 + \alpha\right)\Vert p\Vert_2^2.
\end{align*} 
Thus, the Hessian matrix $\nabla_u^2 J_{\alpha}(u^*)$ is coercive for all $p\in E$ with parameter 
\begin{align*}
\gamma = \cos^2\left(\sphericalangle(u^*-\widehat{u}_h, \nabla_u G(u^*))\right)\Vert\nabla_u G(u^*)\Vert_2^2 + \alpha.
\end{align*}
Since $u^*-\widehat{u}\in E$, we can perform the proof of Theorem~\ref{Thm opt control a priori} only for directions $p\in E$ which yields that
\begin{align*}
\Vert u^* - \widehat{u}_h\Vert_2 &\le \frac{\Vert\nabla_u J_{\alpha}(u^*)-\nabla_u J_{h,\alpha}(u^*)\Vert_2 }{\cos^2\left(\sphericalangle(u^*-\widehat{u}_h, \nabla_u G(u^*))\right)\Vert\nabla_u G(u^*)\Vert_2^2 + \alpha}
\\ & \le \frac{\Vert\nabla_u G_h(u^*)\Vert_2}{\cos^2\left(\sphericalangle(u^*-\widehat{u}_h, \nabla_u G(u^*))\right)\Vert\nabla_u G(u^*)\Vert_2^2 + \alpha}C_{\mathrm{FE}}h^s,
\end{align*}
since it holds $\Vert\nabla_u J_{h,\alpha}(u^*)\Vert_2=\vert G_h(u^*)\vert\Vert\nabla_u G_h(u^*)\Vert_2\le C_{\mathrm{FE}} h^s \Vert\nabla_u G_h(u^*)\Vert_2$ and $\nabla_u J_\alpha(u^*) = 0$. Applying the limit, $\alpha\rightarrow 0$ we conclude that $G_h(\widehat{u}_h)\rightarrow 0$ and, thus, $\widehat{u}_h\rightarrow u_h^*\in\partial A_h$. By Assumption~\ref{Assumption Gradient LSF}, it holds that $\cos^2\left(\sphericalangle\left(u^*-u_h^*, \nabla_u G(u^*)\right)\right)\ge \nu_h^2$ and we conclude that
\begin{align*}
\Vert u^* - u_h^*\Vert\le \frac{\Vert\nabla_u G_h(u^*)\Vert_2}{\nu_h^2\Vert\nabla_u G(u^*)\Vert_2^2}C_{\mathrm{FE}}h^s.
\end{align*}
Since this holds true for all $u^* \in \partial A$, we define 
\begin{align*}
C_3(h) := C_{\mathrm{FE}}\cdot \underset{u\in \partial A}{\sup}\quad \frac{\Vert\nabla_u G_h(u)\Vert_2}{\nu_h^2\Vert\nabla_u G(u)\Vert_2^2}
\end{align*}
and the desired statement is proved.
\end{proof}

\begin{remark}
For the limit $h\rightarrow 0$, it holds that $G_h(u)\rightarrow G(u)$ for all $u\in\mathbb{R}^n$. Therefore, the limit-state surface $\partial A_h$ converges to $\partial A$ and $u_h\rightarrow u$. Hence, it holds $\vert \cos\left(\sphericalangle(u-u_h,\nabla_u G(u))\right)\vert \rightarrow 1$, where $u_h\in\partial A_h$ is the point on $\partial A_h$ that has smallest distance to $u\in\partial A$. Thus, $\nu_h\rightarrow 1$, as $h\rightarrow 0$. If in addition $\nabla_u G_h(u)$ converges uniformly to $\nabla_u G(u)$, it yields that 
\begin{align*}
\underset{h\rightarrow 0}{\lim}\quad C_3(h) = C_{\mathrm{FE}}\cdot\underset{u\in\partial A}{\sup}\quad 1/\Vert \nabla_u G(u)\Vert_2.
\end{align*}
\end{remark}

With (P1)--(P3), we can now give the proof of Proposition~\ref{proposition}.
\begin{proof}[Proof of Proposition~\ref{proposition}]
We denote the distances of the MLFPs to the origin as $b=\Vert u^{\mathrm{MLFP}}\Vert_2$ and $b_h=\Vert u_h^{\mathrm{MLFP}}\Vert_2$. By definition, we know that $P_f^{\mathrm{FORM}} = \Phi(-b) = \mathbb{P}[U_1 \le -b]$, where $U_1$ is a one-dimensional random variable distributed according to $\mathrm{N}(0, 1)$. Similar, $P_{f,h}^{\mathrm{FORM}}=\Phi(-b_h)=\mathbb{P}[U_1\le -b_h]$. From Theorem~\ref{Thm. LSF distance}, we know that the distance between $\partial A$ and $\partial A_h$ is bounded from above by $C_3(h)h^s$. Thus, $\vert b - b_h\vert \le C_3(h)h^s$. This yields that the absolute error $\vert P_f^{\mathrm{FORM}} - P_{f,h}^{\mathrm{FORM}}\vert$ is bounded from above by 
\begin{align}
\vert P_f^{\mathrm{FORM}} -P_{f,h}^{\mathrm{FORM}}\vert &= \vert \mathbb{P}[U_1\le -b]-\mathbb{P}[U_1\le -b_h]\vert\label{proof part 0 proposition}
\\&\le \mathbb{P}[U_1\in ]-b-C_3(h)h^s,-b + C_3(h)h^s]],\label{proof part 1 proposition}
\end{align} 
where we apply similar steps as in the proof of Lemma~\ref{lemma absolute error}. The probability term in~\eqref{proof part 1 proposition} is equal to $\mathbb{P}[\widetilde{G}(U_1) \in ]-C_3(h)h^s,C_3(h)h^s]]$ where the LSF $\widetilde{G}(U_1):= U_1 +b$ satisfies~\eqref{ass uniform bound} in Assumption~\ref{Ass LSF error} with $C_{\mathrm{FE}}=C_3(h)$. By definition it holds $P_{f}^{\mathrm{FORM}} = \mathbb{P}[\widetilde{G}(U_1)\le 0]$. Since $\widetilde{G}$ is affine linear, we apply Theorem~\ref{relative error linear case} to the LSF $\widetilde{G}$ with $\sigma=1$ and $\beta=b$ which yields 
\begin{align}
\mathbb{P}[\widetilde{G}(U_1) \in ]-C_3(h)h^s,C_3(h)h^s]] \le C_2(b,1, h^s,C_3(h)) h^s P_{f}^{\mathrm{FORM}}.\label{proof part 2 proposition}
\end{align}
Finally, combining~\eqref{proof part 1 proposition} and~\eqref{proof part 2 proposition} we conclude that
\begin{align*}
\vert P_f^{\mathrm{FORM}} -P_{f,h}^{\mathrm{FORM}}\vert \le C_2(b,1, h^s,C_3(h)) h^s P_{f}^{\mathrm{FORM}}=:\widehat{C}^{\mathrm{FORM}} h^s P_{f}^{\mathrm{FORM}}.
\end{align*}
\end{proof}

The proof of Theorem~\ref{Thm error bound} works in a similar way as the proof of Proposition~\ref{proposition}. However, the inequalities in~\eqref{proof part 0 proposition} and~\eqref{proof part 1 proposition} do not hold directly for the absolute error $\vert P_f - P_{f,h}\vert$, which is upper bounded by the Gaussian measure of the set $S:= A\triangle A_h$. The following theorem provides an upper bound of the Gaussian measure of $S$ which is similar to~\eqref{proof part 1 proposition}. In this theorem, the convexity of the failure domains is required, i.e., we assume that Assumption~\ref{Ass. convex failure domains} holds. We switch the roles of $P_f$ and $P_{f,h}$, which yields that the derived error bound depends on $P_{f,h}^{\mathrm{FORM}}$ and not on $P_f^{\mathrm{FORM}}$.

\begin{theorem}\label{Thm bound of Gaussian measure}
Let Assumption~\ref{Ass gaussian}~(i) and~\ref{Ass. convex failure domains} hold. Moreover, we assume that for all $u_h\in\partial A_h$ it holds that
\begin{align}
\mathrm{dist}(u_h,\partial A) \le \widetilde{C} h^s.\label{Thm ass distance}
\end{align}
The distance between the origin and $\partial A_h$ is denoted as $b_h$. Then, an upper bound for the Gaussian measure of the symmetric difference of $A$ and $A_h$ is given by
\begin{align*}
\mathbb{P}[U\in A\triangle A_h] \le C_4(n)\mathbb{P}[U_1\in ]-b_h-\widetilde{C}h^s, -b_h+\widetilde{C}h^s]],
\end{align*}
where $U_1$ is distributed according to $\mathrm{N}(0,1)$. The constant $C_4$ is given by
\begin{align*}
C_4(n) = 1 + \pi^{1/2}\frac{\Gamma((n+1)/2)}{\Gamma(n/2)},
\end{align*}
where $\Gamma(\cdot)$ is the Gamma function.
\end{theorem}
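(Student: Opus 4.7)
My plan is to reduce the Gaussian measure of the symmetric difference $A\triangle A_h$ to a one-dimensional Gaussian probability by decomposing $\mathbb{R}^n$ along the MLFP direction of $A_h$ and exploiting convexity of both failure domains. Write $\hat{u}_h := u_h^{\mathrm{MLFP}}/b_h$. Since $A_h$ is convex with $u_h^{\mathrm{MLFP}}$ its nearest point to the origin, the hyperplane $\Pi_h := \{u\in\mathbb{R}^n : u\cdot\hat{u}_h = b_h\}$ supports $A_h$ at the MLFP and $A_h\subseteq H_h := \{u\in\mathbb{R}^n : u\cdot\hat{u}_h\ge b_h\}$. Under Assumption~\ref{Ass gaussian}~(i) the coordinates $t := U\cdot\hat{u}_h$ and $W := U - t\hat{u}_h$ are independent standard Gaussians of dimension $1$ and $n-1$ respectively, so with $u = t\hat{u}_h+w$ and $w\in\hat{u}_h^\perp$ the density factorises as $\varphi_n(u) = \varphi_1(t)\varphi_{n-1}(w)$.

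Next I would apply Fubini's theorem to write
\begin{align*}
\mathbb{P}[U\in A\triangle A_h] = \int_{\hat{u}_h^\perp}\int_{\mathbb{R}}I(t\hat{u}_h+w\in A\triangle A_h)\,\varphi_1(t)\,\mathrm{d}t\,\varphi_{n-1}(w)\,\mathrm{d}w.
\end{align*}
For each fixed $w$, convexity of $A_h$ makes $\{t : t\hat{u}_h+w\in A_h\}$ an interval contained in $[b_h,\infty)$ (since $A_h\subseteq H_h$), and convexity of $A$ makes the corresponding slice of $A$ an interval as well; their symmetric difference is a union of at most two short intervals near the endpoints of the slices. The assumption~\eqref{Thm ass distance} together with convexity then bounds the displacement of corresponding endpoints along the $\hat{u}_h$-axis; after performing the inner $t$-integral, each slice contributes the $\varphi_1$-mass of a short interval centred near $b_h$. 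Integrating over $w$ aggregates these contributions to the form $C_4(n)\,\mathbb{P}[U_1\in ]b_h-\widetilde{C}h^s, b_h+\widetilde{C}h^s]]$, and the symmetry $\varphi_1(s)=\varphi_1(-s)$ of the standard Gaussian finally rewrites this as $C_4(n)\,\mathbb{P}[U_1\in ]-b_h-\widetilde{C}h^s,-b_h+\widetilde{C}h^s]]$.

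The main obstacle is the axial-versus-Euclidean displacement mismatch in the second step: the assumption gives an Euclidean bound $\widetilde{C}h^s$ on the distance from $\partial A_h$ to $\partial A$, but the displacement along $\hat{u}_h$ within a slice is amplified by $1/|\cos\theta(w)|$, where $\theta(w)$ is the angle between $\hat{u}_h$ and the outward normal to $\partial A_h$ at the relevant endpoint. Convexity of $A_h$, together with $u_h^{\mathrm{MLFP}}$ being the minimum-norm point, is exactly what keeps these amplification factors integrable against $\varphi_{n-1}(w)\,\mathrm{d}w$. I expect the constant $C_4(n) = 1 + \pi^{1/2}\Gamma((n+1)/2)/\Gamma(n/2)$ to emerge from this integration: the Gamma ratio is proportional to the first absolute moment of an $n$-dimensional standard Gaussian, reflecting the averaged tilt of $\partial A_h$ away from $\Pi_h$, while the additive $1$ accounts for the ``flat'' contribution around the MLFP itself where the tangent coincides with $\Pi_h$ and the displacement matches $\widetilde{C}h^s$ without amplification.
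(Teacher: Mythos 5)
Your slicing strategy (Fubini along the MLFP axis, treating each line parallel to $\hat{u}_h$ separately) is genuinely different from the paper's argument, which instead integrates over spherical shells $S_{n-1}(r)$ centred at the origin and reduces everything to a surface-measure comparison: the symmetric difference is contained in a $\widetilde{C}h^s$-tube around $\partial A_h$, and by convexity the $(n-1)$-dimensional measure of $B_R\cap\partial A_h$ is at most that of the flat disc $B_R\cap\{u_n=-b_h\}$ plus half the surface of the sphere of radius $r=\sqrt{R^2-b_h^2}$; the ratio of these areas is exactly $C_4(n)=1+\pi^{1/2}\Gamma((n+1)/2)/\Gamma(n/2)$. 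That comparison is the entire content of the constant, and it is the step your proposal does not supply.

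The gap in your version is precisely the ``obstacle'' you name and then defer. First, the hypothesis bounds $\mathrm{dist}(u_h,\partial A)$ in Euclidean norm, and the nearest point of $\partial A$ to $u_h$ generally lies on a \emph{different} slice $w$; converting this into a bound on the axial endpoint difference $|t_1(w)-t_2(w)|$ on a fixed slice already requires controlling the tilt of the boundary, i.e.\ the factor $1/|\cos\theta(w)|$. Second, that factor is unbounded: near portions of $\partial A_h$ that run parallel to $\hat{u}_h$ (which a convex unbounded $A_h$ can certainly have), the per-slice symmetric difference is not a ``short interval'' at all, so the estimate $\mathbb{P}[\text{interval}]\le \text{length}\times\sup\varphi_1$ degenerates and the contributions must instead be weighted by $|\cos\theta(w)|$ via a coarea-type identity before being summed --- which is exactly the surface-area bookkeeping the paper performs in radial coordinates. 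Your closing suggestion that $C_4(n)$ should ``emerge'' as (a multiple of) the first absolute moment of an $n$-dimensional Gaussian is numerologically consistent with the Gamma ratio but is not derived, and it does not match where the constant actually comes from (volume of the $(n-1)$-ball plus half the surface of the $n$-ball, divided by the volume of the $(n-1)$-ball). As written, the proposal is a plausible program rather than a proof: the one estimate that carries all the difficulty is asserted, not established.
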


\begin{proof}
For $n=1$, the statement directly follows from assumption~\eqref{Thm ass distance} with $C_4(n)=1$. Consider $n>1$. By the rotation invariance of the Gaussian measure $\mathbb{P}[U \in \cdot] = \mathrm{N}(0, \mathrm{Id}_n)$, we assume, without loss of generality, that the point with smallest distance to the origin is given by $b_h^* =(0,\dots,0,-b_h)^T\in\mathbb{R}^n$, thus, $u_h^{\mathrm{MLFP}}=b_h^*$. We denote the sets of interest by $D_1:= A\triangle A_h$ and $D_2:=\{u\in\mathbb{R}^n: u_n\in ]-b_h-\widetilde{C}h^s, -b_h+\widetilde{C}h^s]\}$. 
\\First, we consider the closed ball around the origin $B_R :=\{u\in\mathbb{R}^n: \Vert u \Vert \le R\}$ and we show that $\mathbb{P}[U\in D_1\cap B_R] \le C_4(n) \mathbb{P}[U\in D_2\cap B_R]$. Afterwards, we consider the limit $R\rightarrow \infty$.
\begin{figure}[htbp]
\centering
	\includegraphics[trim=0cm 0cm 0cm 0cm,scale=0.3]{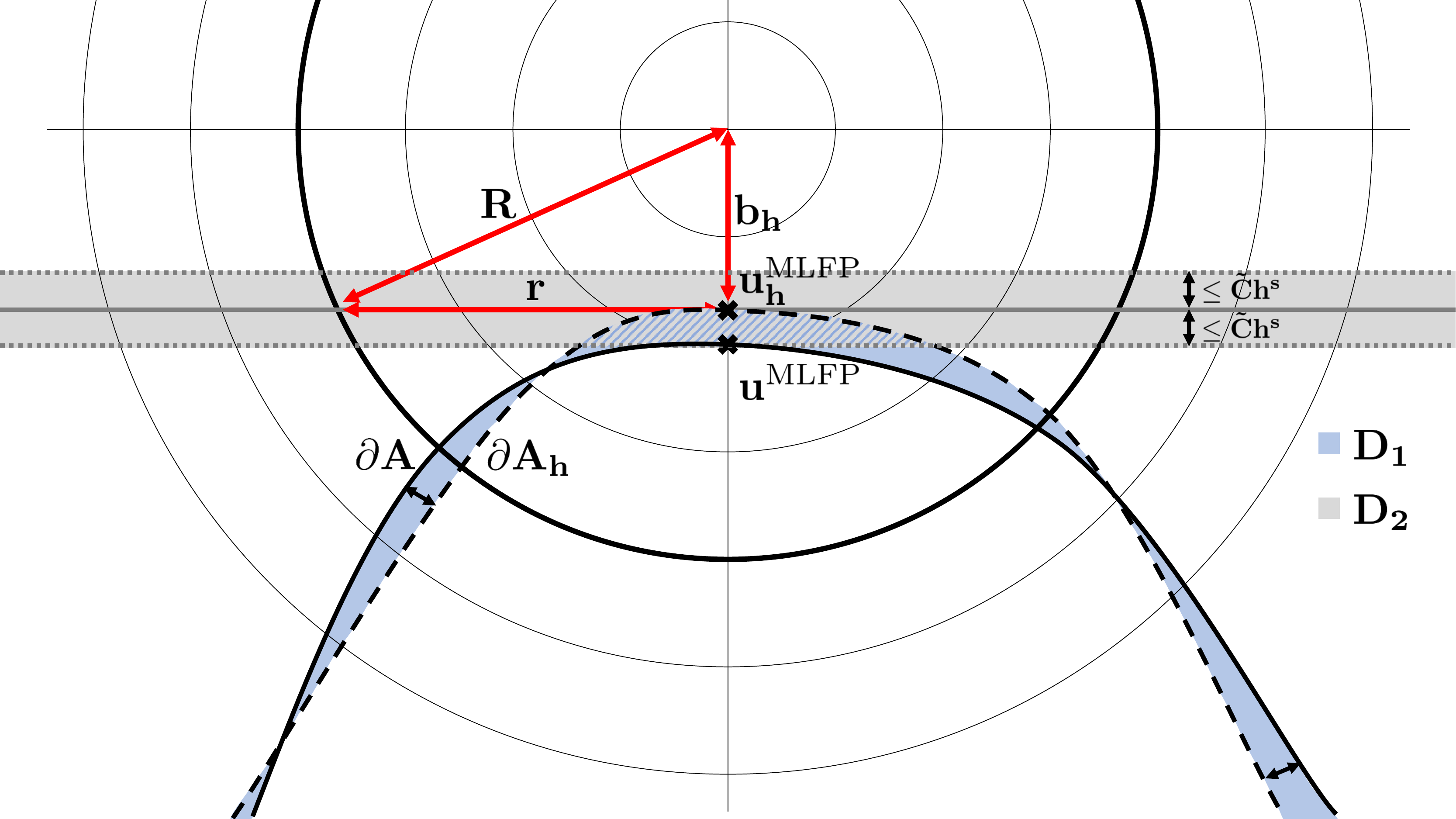}
		\caption{Illustration of the limit-state surfaces $\partial A$ and $\partial A_h$ in 2D. The illustrations shows that the Gaussian measure of $A\triangle A_h$ can be translated to a Gaussian measure at the MLFP $u_h^{MLFP}$.}
		\label{Fig proof illu}
\end{figure}
\\ For $R\le b_h$, it follows that $D_1\cap B_R=\emptyset$, and the statement is valid. Therefore, we consider $R>b_h$. First, we determine the scaling factor of the $n-1$ dimensional Lebesgue measure of the transformation of a convex curve to a hyperplane. Therefore, we consider the sets $E_1=B_R\cap \partial A_h$ and $E_2=B_R\cap\{u\in\mathbb{R}^n:u_n = -b_h\}$. Note that $E_2$ is an $n-1$-dimensional ball centred in $b^*_h$. We visualize this in Figure~\ref{Fig proof illu}. We call the radius of this ball $r :=\sqrt{R^2-b_h^2}$. Thus, the $n-1$-dimensional Lebesgue measure of $E_2$ is equal to the volume of the ball $B_r$ in $n-1$ dimensions, which is given by 
\begin{align*}
\lambda_{n-1}(E_2) = \frac{\pi^{(n-1)/2}r^{n-1}}{\Gamma((n+1)/2)}.
\end{align*}
By the convexity of $A_h$, we conclude that an upper bound for the $n-1$ dimensional Lebesgue measure of $E_1$ is given by the surface measure of the set $B_R\cap\{u\in\mathbb{R}^n:u_n\le -b_h\}$. 
This surface measure is bounded from above by the sum of the volume of the ball $B_r$ in $n-1$ dimensions and $1/2$ of the surface of the ball $B_r$ in $n$ dimensions which yields
\begin{align*}
\lambda_{n-1}(E_1) \le \frac{\pi^{(n-1)/2}r^{n-1}}{\Gamma((n+1)/2)}+\frac{\pi^{n/2}r^{n-1}}{\Gamma(n/2)}.
\end{align*}
Hence, the fraction of the two $n-1$ dimensional Lebesgue measures is bounded by
\begin{align}
\frac{\lambda_{n-1}(E_1)}{\lambda_{n-1}(E_2)}\le 1 + \pi^{1/2}\frac{\Gamma((n+1)/2)}{\Gamma(n/2)}=C_4(n).\label{Ineq scaling}
\end{align}
The formulas for the volume and surface of a ball in $n$ dimensions are given in \cite[5.19(iii)]{Nist10}. Inequality~\eqref{Ineq scaling} bounds the scaling factor of the length of the curve $E_1$ with respect to the hyperplane $E_2$. Applying this result, we can transform the set $D_1$ into $D_2$. The probability of interest is given by
\begin{align*}
\mathbb{P}[U\in D_1\cap B_R] &= \int_{u\in\mathbb{R}^n} I(u\in D_1\cap B_R)\varphi_n(u)\mathrm{d}u 
\\ &= \frac{1}{(2\pi)^{n/2}} \int_{B_R}I(u\in D_1)\exp\left(-\frac{\Vert u \Vert_2^2}{2}\right)\mathrm{d}u
\\ &= \frac{1}{(2\pi)^{n/2}} \int_{r=0}^{r=R} \int_{S_{n-1}(r)} I(u\in D_1)\exp\left(-\frac{r^2}{2}\right)\mathrm{d}s\mathrm{d}r,
\end{align*}
where $S_{n-1}(r)=\{u\in\mathbb{R}^n:\Vert u \Vert_2 = r\}$ is the surface of $B_r$. Since the distance of $A$ and $A_h$ is always smaller than $\widetilde{C}h^s$, the $n-1$ dimensional Lebesgue measure of the intersection $D_1\cap S_{n-1}(r)$ is smaller or equal than the $n-1$ dimensional Lebesgue measure of the intersection $D_2\cap S_{n-1}(r)$. Hence, since the standard Gaussian density is constant on $S_{n-1}(r)$ for all $r\ge 0$ and applying the transformation of $D_1$ to $D_2$ it follows that

\begin{align*}
\mathbb{P}[U\in D_1\cap B_R] &\le C_4(n) \frac{1}{(2\pi)^{n/2}} \int_{r=0}^{r=R} \int_{S_{n-1}(r)} I(u\in D_2)\exp\left(-\frac{r^2}{2}\right)\mathrm{d}s\mathrm{d}r
\\ &= C_4(n) \mathbb{P}[U\in D_2\cap B_R]
\\ &\le C_4(n) \mathbb{P}[U\in D_2] 
\\ &= C_4(n) \mathbb{P}[U_1\in ]-b_h-\widetilde{C}h^s,-b_h+\widetilde{C}h^s]].
\end{align*} 
Taking the limit $R\rightarrow \infty$ we get the desired statement, due to the continuity of measures.
\end{proof}

\begin{remark} 
Unfortunately, taking the limit $n\rightarrow \infty$ yields $C_4(n)\rightarrow \infty$. Thus, this result does not easily generalise with respect to infinite-dimensional settings. However, the growth of $C_4(n)$ is $\mathcal{O}(n^{1/2})$ as visualised in Figure~\ref{Fig C_4}. Hence, even in high dimensions, the constant is reasonably small.
\end{remark}

\begin{figure}[htbp]
\centering
	\includegraphics[trim=0cm 0cm 0cm 0cm,scale=0.18]{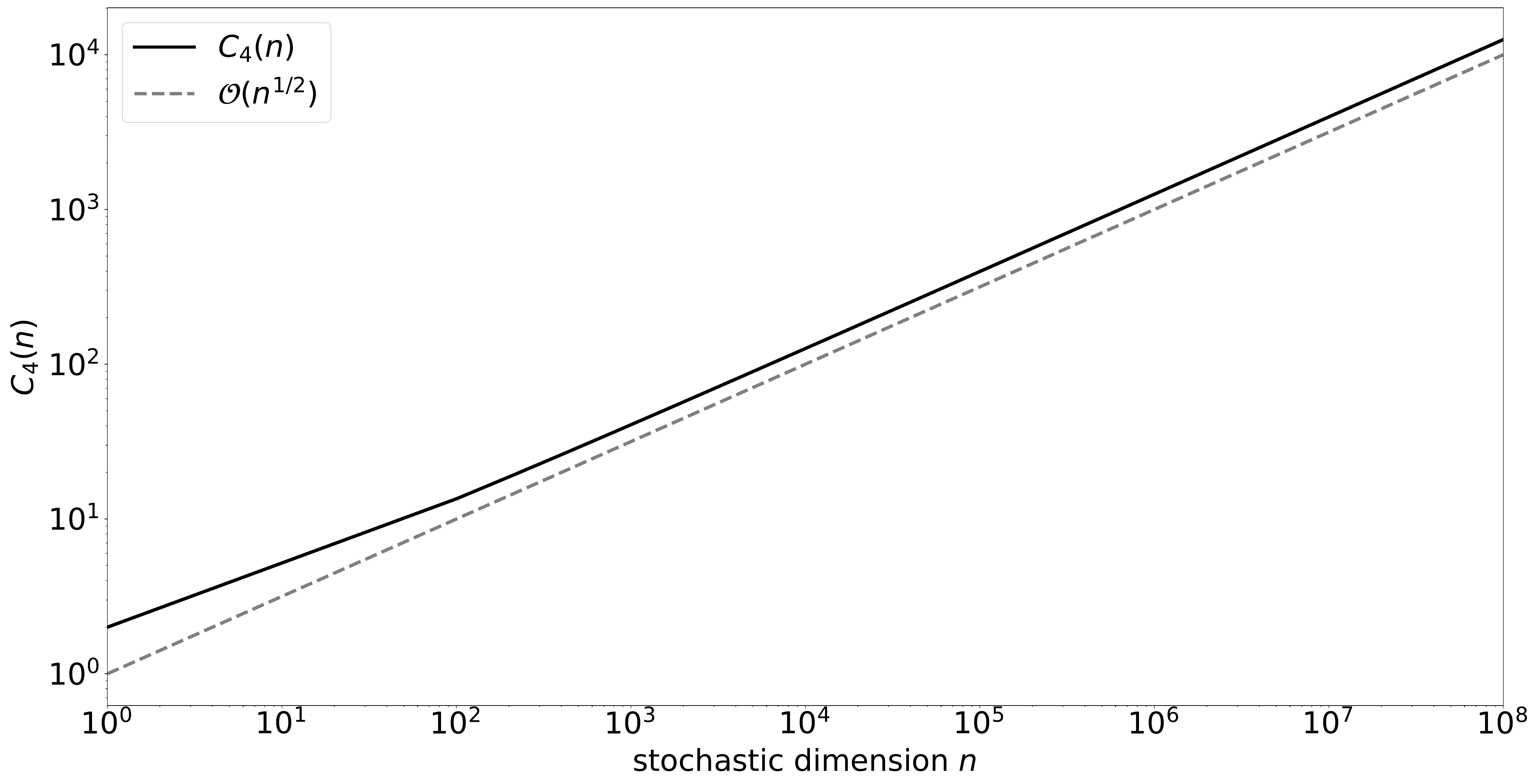}
		\caption{Behaviour of $C_{4}(n)$ for varying the stochastic dimension $n$.}
		\label{Fig C_4}
\end{figure}

Having collected the proofs for (P1)--(P4), we can now give the proof of (P5) which proves Theorem~\ref{Thm error bound}.

\begin{proof}[Proof of Theorem~\ref{Thm error bound}]
We apply similar steps as in the proof of Proposition~\ref{proposition}, but we switch the roles of $P_f$ and $P_{f,h}$. By Theorem~\ref{Thm. LSF distance} and applying Theorem~\ref{Thm bound of Gaussian measure} with $\widetilde{C}=C_3(h)$, we know that the absolute error of the probability of failure is bounded by 
\begin{align}
\vert P_f -P_{f,h}\vert &\le C_4(n)\mathbb{P}[U_1\in ]-b_h-C_3(h)h^s,-b_h + C_3(h)h^s]].\label{proof part 1}
\end{align} 
Defining the LSF $\widetilde{G}_h(U_1):= U_1 +b_h$ and assuming that $\widetilde{G}$ satisfies~\eqref{ass uniform bound} in Assumption~\ref{Ass LSF error} with $\widetilde{C}_{\mathrm{FE}}= C_3(h)$ yields 
\begin{align}
\mathbb{P}[\widetilde{G}_h(U_1) \in ]-C_3(h)h^s,C_3(h)h^s]] \le C_2(b_h,1, h^s,C_3(h)) h^s P_{f,h}^{\mathrm{FORM}}.\label{proof part 2}
\end{align}
Finally, combining~\eqref{proof part 1} and~\eqref{proof part 2} we conclude the proof of Theorem~\ref{Thm error bound} with
\begin{align*}
\vert P_f -P_{f,h}\vert \le C_2(b_h,1, h^s,C_3(h))\cdot C_4(n) h^s P_{f,h}^{\mathrm{FORM}}=:\widehat{C} h^s P_{f,h}^{\mathrm{FORM}}.
\end{align*}
\end{proof}

\begin{remark}
We note that the assumptions on the regularity of the diffusion coefficient, as given in Assumption~\ref{regularity of a} and~\ref{Ass gaussian}, are only relevant to prove Theorem~\ref{Thm opt control a priori} and~\ref{Thm. LSF distance}, respectively. If the approximation error bound of the LSF behaves in another manner as in Assumption~\ref{Ass LSF error} and if it is possible to show that the distance between $\partial A$ and $\partial A_h$ behaves in the same manner, then the error bounds in Proposition~\ref{proposition} and Theorem~\ref{Thm error bound} hold for this error bound.
\end{remark}

\section{Numerical Experiments}\label{Sec. experiments}
We now illustrate our results in several numerical experiments. We start with a one-dimensional parameter space example where the LSF involves an ODE, not a PDE. Then, we consider rare events that depend on elliptic PDEs with stochastic spaces of different dimensions. In all experiments, the approximation error of $G$ and $G_h$ is not uniformly bounded as required in Assumption~\ref{Ass LSF error}. We consider these settings to test the conjecture in Remark~\ref{Remark error bound}. Indeed, our provided error bounds in Proposition~\ref{proposition} and Theorem~\ref{Thm error bound} are also observed in these non-uniform cases. In the first experiment, we observe that the distances of the failure surfaces behave as the discretization error. Following Remark~\ref{Remark distance}, this behaviour enables to consider only (P4) and (P5) and we expect that the provided error bounds hold in this setting.

\subsection{ODE, 1-dimensional parameter space}\label{Sec 5.1}
In the following example, which is also considered in \cite{Ullmann15}, the LSF depends on the solution of an ODE with a one-dimensional Gaussian random parameter. Hence, this example does not actually depend on an elliptic PDE. We study it, since we can compute all quantities of interest analytically.
\\Let $y:[0,1]\times\Omega\rightarrow \mathbb{R}$ such that for $\mathbb{P}$-a.e. $\omega\in\Omega$, we have
\begin{align*}
\frac{\partial y(t,\omega)}{\partial t} = -U(\omega)y(t,\omega), t\in(0,1), \text{ with initial condition } y(0, \omega)=1,
\end{align*}
where $U\sim \mathrm{N}(0,1)$ is a stochastic parameter. The exact solution to this ODE is given by $y(t,\omega) = \exp(-U(\omega)t)$ -- a $\log$-normal stochastic process. Failure is defined as the event that the solution $y(\cdot, \omega)$ is larger than $y_{\mathrm{max}} = 40$ at $t=1$, which can be written in terms of the LSF $G(u) = y_{\mathrm{max}} - \exp(-u)\le 0$. Hence, failure occurs if $u\le-\log(y_{\mathrm{max}})$ and the exact probability of failure is equal to $P_f=\Phi(-\log(y_{\mathrm{max}}))\approx 1.13\cdot 10^{-4}$. The MLFP is given by $u^{\mathrm{MLFP}} = -\log(y_{\mathrm{max}})$.
\\Using the explicit Euler scheme to solve the ODE numerically, we derive the approximate solution $y_h(t=1, \omega) = (1-U(\omega)h)^{1/h}$, where $h>0$ is the time step size. The explicit Euler scheme is convergent of order one, see \cite[Section 6.3]{LeVeque07}, i.e., $\vert y(t,\omega) - y_h(t,\omega)\vert = \mathcal{O}(h)$ for a fixed $\omega\in\Omega$. The approximate LSF is $G_h(u) = y_{\mathrm{max}}-(1-uh)^{1/h}$. Hence, failure occurs if $u\le (1-y_{\mathrm{max}}^h)/h$ and the approximate probability of failure is equal to $P_{f,h} = \Phi((1-y_{\mathrm{max}}^h)/h)$. The approximate MLFP is given by $u_h^{\mathrm{MLFP}} = (1-y_{\mathrm{max}}^h)/h$.
\\Since the space of the stochastic parameter space is one-dimensional and the exact and approximate failure domains are half-rays, it holds $P_f = P_f^{\mathrm{FORM}}$ and $P_{f,h} = P_{f,h}^{\mathrm{FORM}}$. Thus, the error bound of Proposition~\ref{proposition} and Theorem~\ref{Thm error bound} yield a bound for the relative error $\vert P_f-P_{f,h}\vert /P_f$. As mentioned, the setting of this example is different to the setting of Proposition~\ref{proposition} and Theorem~\ref{Thm error bound}. Moreover, the approximation error of the LSF is not uniformly bounded. Figure~\ref{1D-1D example} shows that the distance of the failure domains scales as $\mathcal{O}(h)$. Thus, we expect that the relative error of the probability of failure approximations has order $\mathcal{O}(h)$ of convergence for $h>0$ sufficiently small.
\\As a second time stepping method, we consider the Crank--Nicolson scheme as given in \cite[Chapter 9]{LeVeque07}. Applying the discretization rule, we get the approximate solution 
\begin{align*}
\widetilde{y}_h(t=1, \omega) = \left(\frac{1-hU(\omega)/2}{1+hU(\omega)/2}\right)^{{1}/{h}}.
\end{align*}
Hence, the approximate probability of failure is given by $\widetilde{P}_{f,h} = \Phi(2h^{-1}(1-y_{\mathrm{max}}^h)/(1+y_{\mathrm{max}}^h))$. Since the Crank--Nicolson scheme is convergent of order $2$, we expect that the relative error of the probability of failure approximations has order $\mathcal{O}(h^2)$ of convergence for $h>0$ sufficiently small.
\\Figure~\ref{1D-1D example} shows the approximate probability of failure by the explicit Euler and the Crank--Nicolson scheme for the step sizes $h_{\ell}=1/2^{\ell}$, for $\ell=0,\dots,9$. We observe that the approximations computed with both these methods approach the exact probability of failure as $h$ decreases. Moreover, we observe that the distance between the exact and approximate MLFPs converges in the same order as the PDE discretization error. Hence, the statement of Theorem~\ref{Thm. LSF distance} is also valid and we conclude that Proposition~\ref{proposition} and Theorem~\ref{Thm error bound} are also applicable for this setting. On the right plot, we observe that the relative error of the explicit Euler approximations has order $\mathcal{O}(h)$ of convergence while the relative error of the Crank--Nicolson approximations has order $\mathcal{O}(h^2)$ of convergence. These are exactly the bounds which we get from theoretical discussions. For large $h$, we observe a plateau behaviour for the explicit Euler scheme. This is due to the fact that the Euler approximation $P_{f,h}$ is much smaller than $P_f$ for large $h$. Hence, the relative error is nearly equal to one for large time step sizes $h$, until the convergence sets in.

\begin{figure}[htbp]
\centering
	\includegraphics[trim=0cm 0cm 0cm 0cm,scale=0.23]{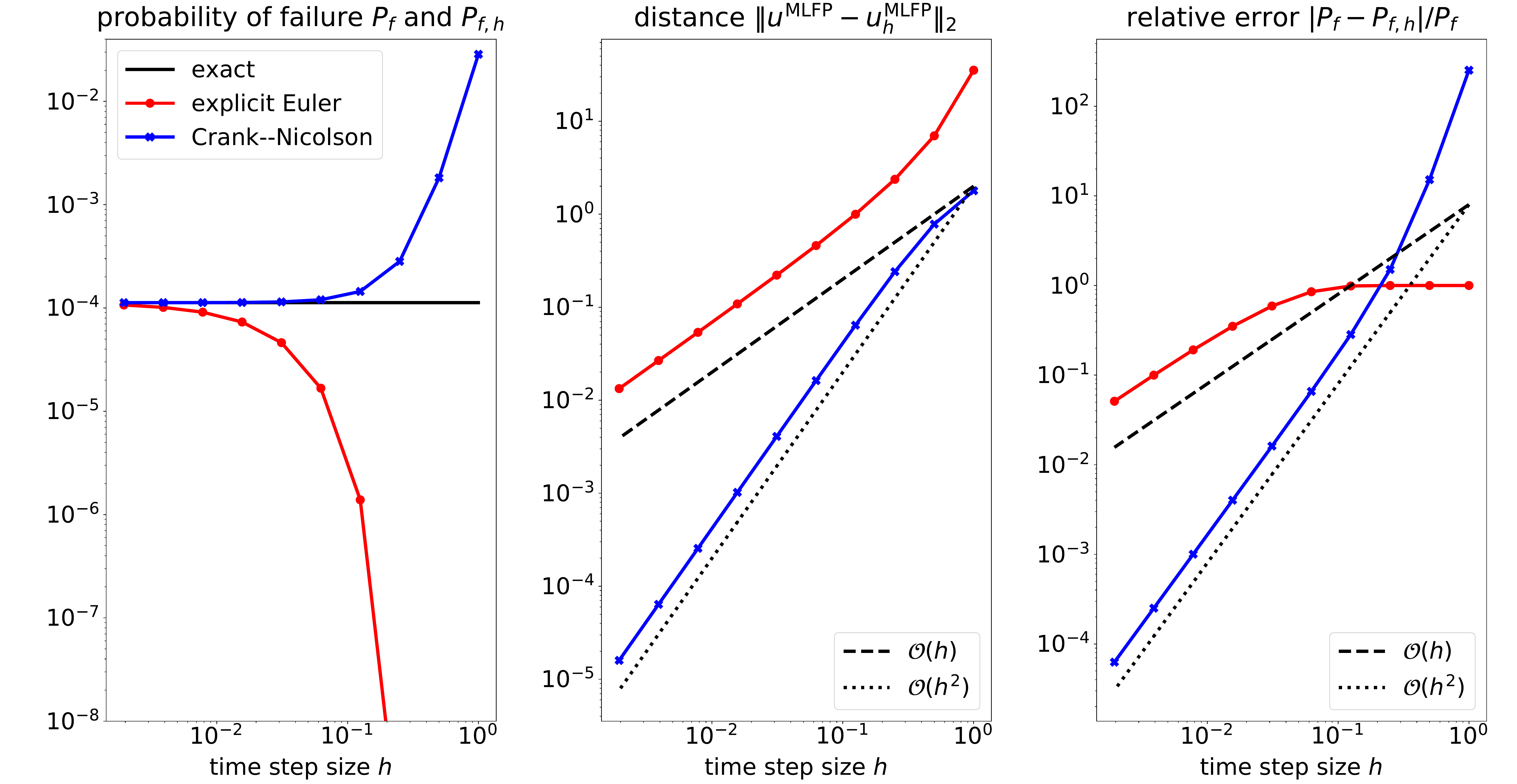}
		\caption{On the left: approximate probability of failure by the explicit Euler and Crank--Nicolson scheme for varying the time step size $h$. The black line shows the exact probability of failure. In the middle: distance between the exact and approximate MLFPs. The black lines show the order of convergence. On the right: relative error of the approximations with respect to the exact probability of failure.}
		\label{1D-1D example}
\end{figure}

\subsection{2-Dimensional parameter space}
The following example considers an LSF which depends on a two-dimensional stochastic parameter and is also considered in \cite{Ernst15,Garbuno20}. In this case, the FORM estimate is not equal to the exact probability of failure. However, we can still derive analytical expressions for the exact and approximate LSF as well as for the exact and approximate limit-state surfaces. On the domain $D=(0,1)$, we seek a solution $y:\overline{D}\times\Omega\rightarrow\mathbb{R}$ that solves the following elliptic boundary value problem 
\begin{align}
- \frac{\partial}{\partial x}\left(\exp\left(\frac{U_1(\omega)}{3}-3\right) \frac{\partial}{\partial x} y(x,\omega)\right) = 1-x, \quad\text{ for } x\in(0,1),\label{BVP}
\\\text{such that } y(0,\omega) = 0 \text{ and } y(1, \omega) = U_2(\omega)\label{BVP boundary}
\end{align}
for $\mathbb{P}$-a.e. $\omega\in\Omega$. The random variables $U_1$ and $U_2$ are independent and standard normally distributed. The exact solution of this problem is
\begin{align*}
y(x,\omega) = U_2(\omega) x+\exp\left(-U_1(\omega)/3+3\right)(x^3/6-x^2/2+x/3).
\end{align*}
Failure is defined as the event that the solution $y(\cdot, \omega)$ is smaller than $y_{\mathrm{max}}= -1/3$ at $\widehat{x}=1/3$. Hence, we express the LSF as $G(U_1(\omega), U_2(\omega)) = y(1/3, \omega) - y_{\mathrm{max}}$. 
\\Applying linear FEs with mesh size parameter $h>0$, we compute the approximate solution to~\eqref{BVP} and~\eqref{BVP boundary} which we denote by $y_h:\overline{D}\times\Omega\rightarrow\mathbb{R}$. Accordingly, the approximate LSF is given by $G_h(U_1(\omega), U_2(\omega)) = y_h(1/3, \omega) - y_{\mathrm{max}}$.
\begin{figure}[htbp]
\centering
	\includegraphics[trim=0cm 0cm 0cm 0cm,scale=0.23]{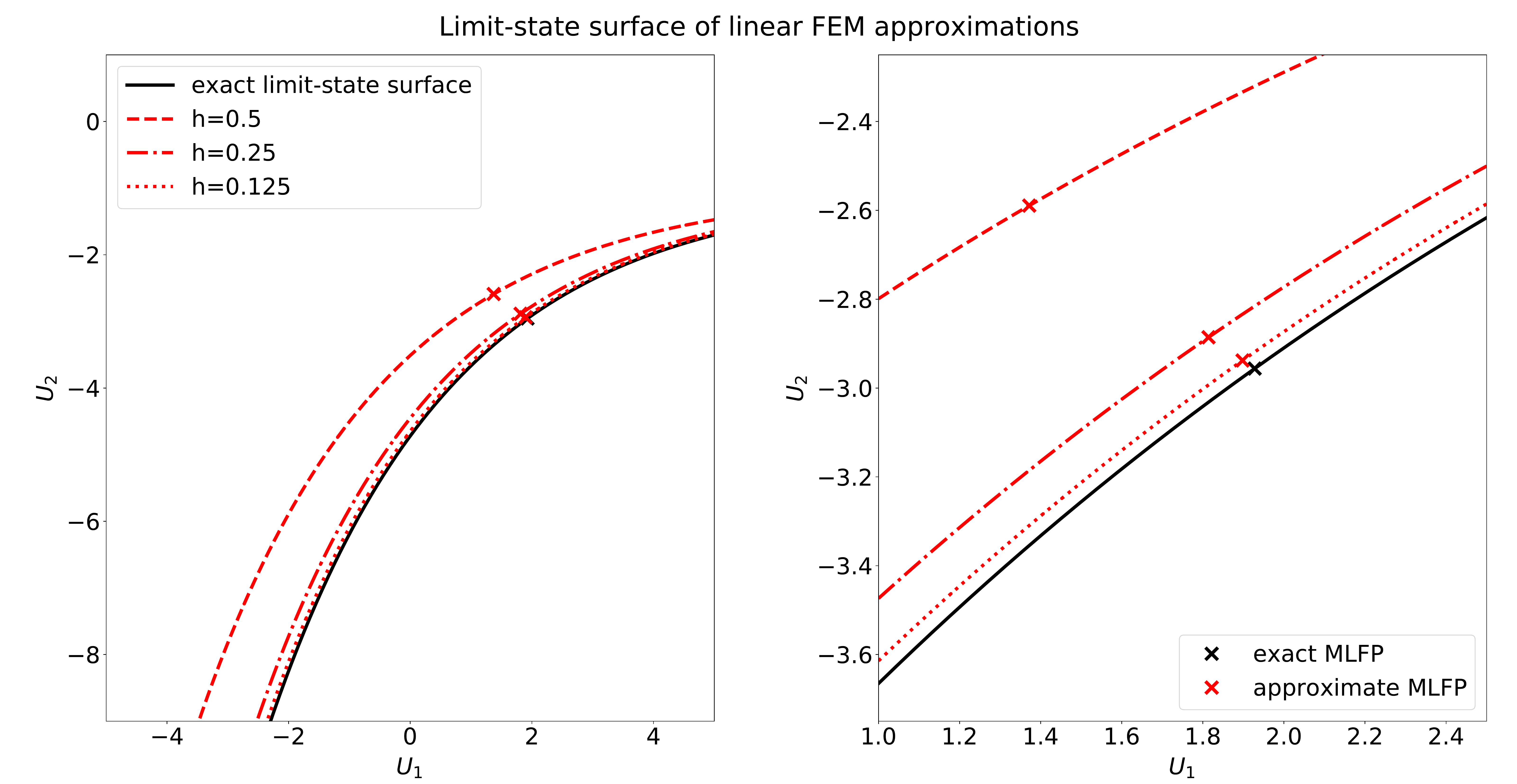}
		\caption{On the left: the black line shows the exact limit-state surface and the black cross denotes the exact MLFP. The red lines show the approximate limit-state surfaces given by the linear FE discretization for different choices of the mesh size $h$. The red crosses denote the approximate MLFPs. On the right: zoom-in of the left plot near the MLFPs.}
		\label{1D-2D example surface}
\end{figure} 
\\Figure~\ref{1D-2D example surface} shows the exact limit-state surface and the limit-state surfaces given by the linear FE approximations. We observe that the exact as well as the approximate failure domains are convex. Indeed, the exact limit-state surface can be expressed as a function in terms of the first coordinate $u_1$ by
\begin{align*}
u_2(u_1) = 1/\widehat{x}\left(-1/3 - \left(\widehat{x}^3/6 - \widehat{x}^2/2+\widehat{x}/3\right)\exp(-u_1/3 + 3)\right).
\end{align*}
Thus, it holds $\partial A = \{(u_1, u_2(u_1)):u_1\in\mathbb{R}\}$. Since $-u_2''(u_1)>0$, we conclude that $-u_2(u_1)$ is a convex function. Since the failure domain $A$ has the same geometric properties as the \emph{epigraph} of $-u_2(u_1)$, we conclude that $A$ is convex. In a similar way, we can prove that $A_h$ is convex. For more details on convex analysis we refer to \cite{Borwein06}.
\\Figure~\ref{1D-2D example surface} also shows that the distances between the exact and approximate surfaces decrease for decreasing mesh size $h$. Since the limit-state surface is not a straight line, the FORM estimates of the probability of failure are not equal to the true ones, i.e., $P_f \neq P_f^{\mathrm{FORM}}$ and $P_{f,h}\neq P_{f,h}^{\mathrm{FORM}}$. The quantities $P_f$ and $P_{f,h}$ are calculated by integrating numerically the standard normal PDF within the failure domain. We obtain the values $P_f\approx 1.71\cdot 10^{-4}$ and $P_{f}^{\mathrm{FORM}}\approx 2.08\cdot 10^{-4}$.
\\Following the theoretical discussions, we expect that
\begin{align*}
\vert P_f - P_{f,h}\vert \le \widehat{C} h^s P_{f,h}^{\mathrm{FORM}},
\end{align*} 
for $h>0$ sufficiently small and $\widehat{C}$ given in the proof of Theorem~\ref{Thm error bound}. The order of convergence $s$ is equal to the order of convergence of the FE discretization. The point evaluation of a linear FE approximation introduces an error of order two, since by \cite[Theorem 1.1]{Douglas75} it holds that the $L^{\infty}$-error is bounded by
\begin{align*}
\Vert y(\cdot, \omega) - y_h(\cdot, \omega)\Vert_{L^{\infty}} \le C\Vert y(\cdot,\omega)\Vert_{W^{2,\infty}} h^2,
\end{align*}
for a fixed $\omega\in\Omega$. Hence, we expect that the error bounds in Proposition~\ref{proposition} and Theorem~\ref{Thm error bound} hold for $s=2$.
\\As another discretization, we apply FEs with quadratic basis functions. In this case, the $L^{\infty}$-error of the exact and FE solution converges with order $\mathcal{O}(h^3)$; see \cite[Theorem 1.1]{Douglas75}. Hence, we expect that the error bounds in Proposition~\ref{proposition} and Theorem~\ref{Thm error bound} hold for $s=3$.
\\Figure~\ref{1D-2D example} shows the error of the probability of failure by linear and quadratic FEs for the mesh sizes $h_{\ell}=1/2^{\ell}$, for $\ell=1,\dots,9$. We observe that the approximate probability of failure $P_{f,h}$ converges to the exact probability of failure $P_f$ for both discretizations. Similarly, $P_{f,h}^{\mathrm{FORM}}$ converges to $P_f^{\mathrm{FORM}}$. The true relative error $\vert P_f -P_{f,h}\vert/P_f$ as well as the relative error of the FORM estimates $\vert P_f^{\mathrm{FORM}}-P_{f,h}^{\mathrm{FORM}}\vert/P_f^{\mathrm{FORM}}$ behaves as the discretization error of the FEM approximations. Moreover, the error bound in Theorem~\ref{Thm error bound} behaves as the discretization error. 

\begin{figure}[htbp]
\centering
	\includegraphics[trim=0cm 0cm 0cm 0cm,scale=0.23]{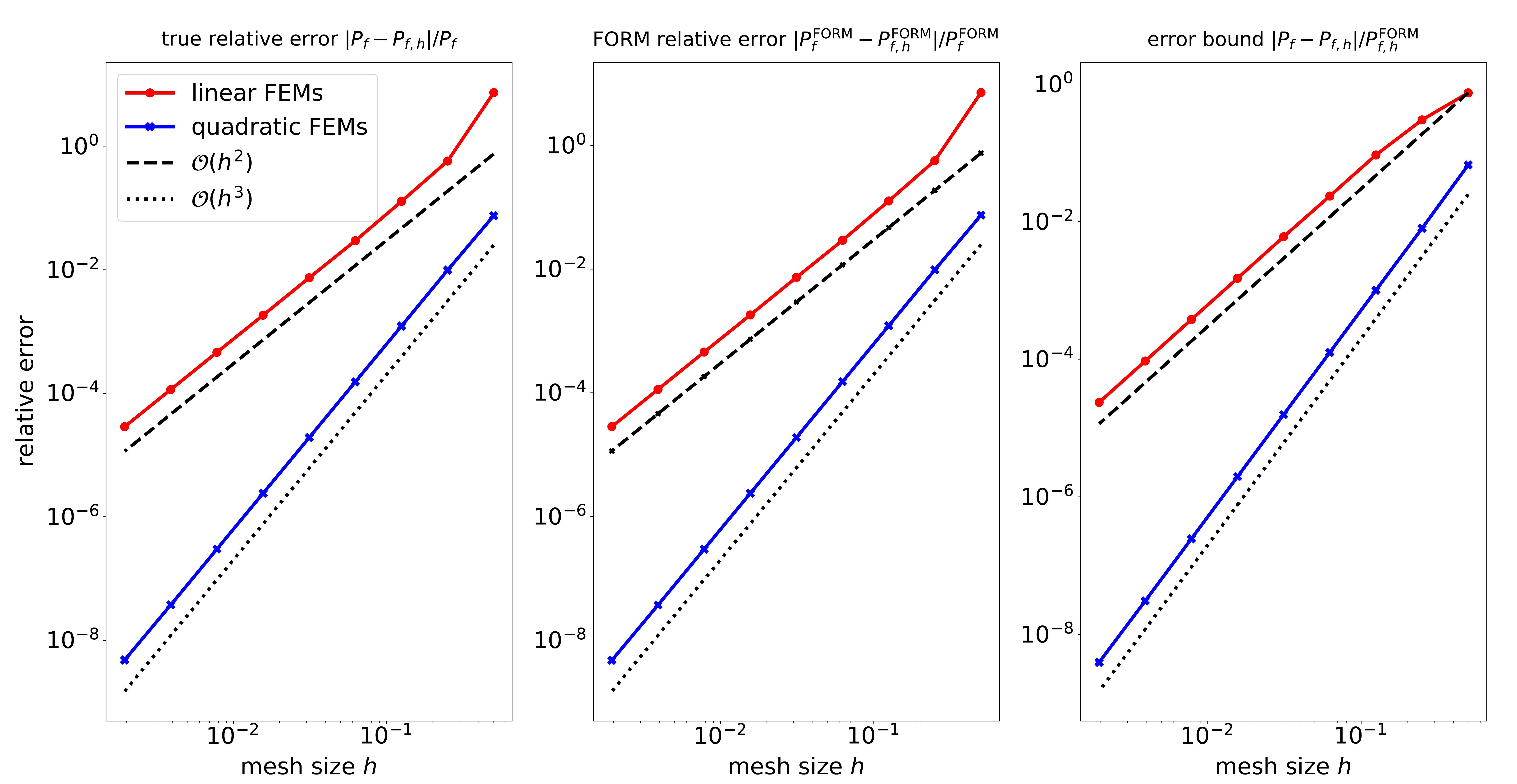}
		\caption{On the left: relative error of the approximate probability of failure $P_{f,h}$ with respect to the exact probability of failure $P_f$. In the middle: relative error of the approximate FORM estimate $P_{f,h}^{\mathrm{FORM}}$ with respect to the exact FORM estimate $P_f^{\mathrm{FORM}}$. On the right: bound of the error $\vert P_f - P_{f,h}\vert$. The black lines show the order of convergence.}
		\label{1D-2D example}
\end{figure}

\subsection{High-dimensional parameter space}\label{Section 5.3}
In the following, we consider Example~\ref{example 1} of Section~\ref{Sec Setting} for which it is not possible to calculate analytic expressions of the exact and approximate PDE solutions $y$ and $y_h$, respectively. Moreover, the limit-state surfaces $A$ and $A_h$ cannot be expressed explicitly. Therefore, we estimate $P_f$ and $P_{f,h}$ with Sequential Importance Sampling (SIS) \cite{Papaioannou16}. 
\\On the domain $D=(0,1)$, we seek a solution $y:\overline{D}\times\Omega\rightarrow\mathbb{R}$ which solves
\begin{align}
-\frac{\partial}{\partial x}\left(a(x, \omega)\frac{\partial}{\partial x} y(x,\omega)\right) &= 0, \quad\text{ for } x\in (0,1),\label{ex 1D diffusion}
\\ \text{such that} \hspace{0.2cm} y(0,\omega) &= 1 \text{ and } y(1,\omega) = 0,\notag
\end{align}
for $\mathbb{P}$-a.e. $\omega\in\Omega$. The random field $a(x,\omega) = \exp(Z(x,\omega))$ is a log-normal random field and the underlying Gaussian field $Z(x,\omega)$ has constant mean $\mu_Z = 0.1$ and variance $\sigma_Z^2=0.04$. The covariance function of $Z$ is $c(x_1,x_2)=\sigma_z^2\exp\left(-\Vert x_1 - x_2\Vert_1/\lambda\right)$, with correlation length $\lambda=0.3$. The random field $Z$ is approximated via its truncated KLE with $n=10$ leading terms, which captures around $93\%$ of the variability of the random field.
Failure is defined as the event that the flow rate $q(\cdot, \omega)$, given in~\eqref{flow rate}, is larger than $q_{\mathrm{max}}= 1.7$ at $\widehat{x}=1$. Hence, we express the LSF as $G(U(\omega)) = q_{\mathrm{max}}- q(1,\omega)$. 
\\Linear FEs are applied with mesh size parameter $h>0$ to obtain the approximate solution $y_h:D\times\Omega\rightarrow\mathbb{R}$ of~\eqref{ex 1D diffusion}. Accordingly, the approximate LSF is given by $G_h(U(\omega)) = q_{\mathrm{max}} - q_h(1,\omega)$. As discussed in Example~\ref{example 1}, linear FEs yield a PDE discretization error of order one. Since the approximation error of the LSF is not uniformly bounded, Proposition~\ref{proposition} and Theorem~\ref{Thm error bound} are not directly applicable. However, as noted in Remark~\ref{Remark error bound}, we expect that our error bounds also hold for $s=1$.
\\The references $P_f$ and $P_{f,h}$ are obtained by averaging over $100$ SIS simulations with $10^4$ samples, target coefficient of variation equal to $0.25$ and using \emph{Markov Chain Monte Carlo} (MCMC) with sampling from the von Mises--Fisher--Nakagami distribution. No burn-in is applied within the MCMC sampling and $10\%$ of the samples are chosen as seeds of the simulated Markov chains via multinomial resampling. Details are given in \cite{Wagner20}. We note that the coefficient of variation of the $100$ probability of failure estimates is $10^{-2}$. Hence, we expect that the sampling bias is negligible. The reference probability of failure is estimated as $P_f \approx 3.38\cdot 10^{-4}$ on a mesh with discretization size $h=2^{-12}$. Similar, the reference FORM estimate $P_f^{\mathrm{FORM}}\approx 4.66\cdot 10^{-4}$ is obtained by FORM with mesh size $h=2^{-12}$. The reference $P_{f,h}$ and $P_{f,h}^{\mathrm{FORM}}$ are obtained on a sequence of mesh sizes $h_{\ell}=1/2^{\ell}$ for $\ell=1,\dots,11$. 
\\The upper left plot of Figure~\ref{1D diffusion} shows the reference probability of failure $P_f$, approximations $P_{f,h}$ and FORM estimates $P_{f,h}^{\mathrm{FORM}}$. We observe that $P_{f,h}^{\mathrm{FORM}}$ is always larger than $P_{f,h}$ for a fixed mesh size $h$. This is a necessary condition for convex failure domains. However, we cannot show that the failure domains are indeed convex and unbounded. The upper right plot shows that the relative error $\vert P_f-P_{f,h}\vert/P_f$ behaves as the discretization error of the LSF. The same holds true for the relative error with respect to the FORM estimates, which is illustrated in the lower left plot. We expected this behaviour by Proposition~\ref{proposition}. Moreover, the lower right plot shows the convergence of our error bound in Theorem~\ref{Thm error bound}. We also observe that the error bound gives an order one approximation which we have expected by Theorem~\ref{Thm error bound}.

\begin{figure}[htbp]
\centering
	\includegraphics[trim=0cm 0cm 0cm 0cm,scale=0.23]{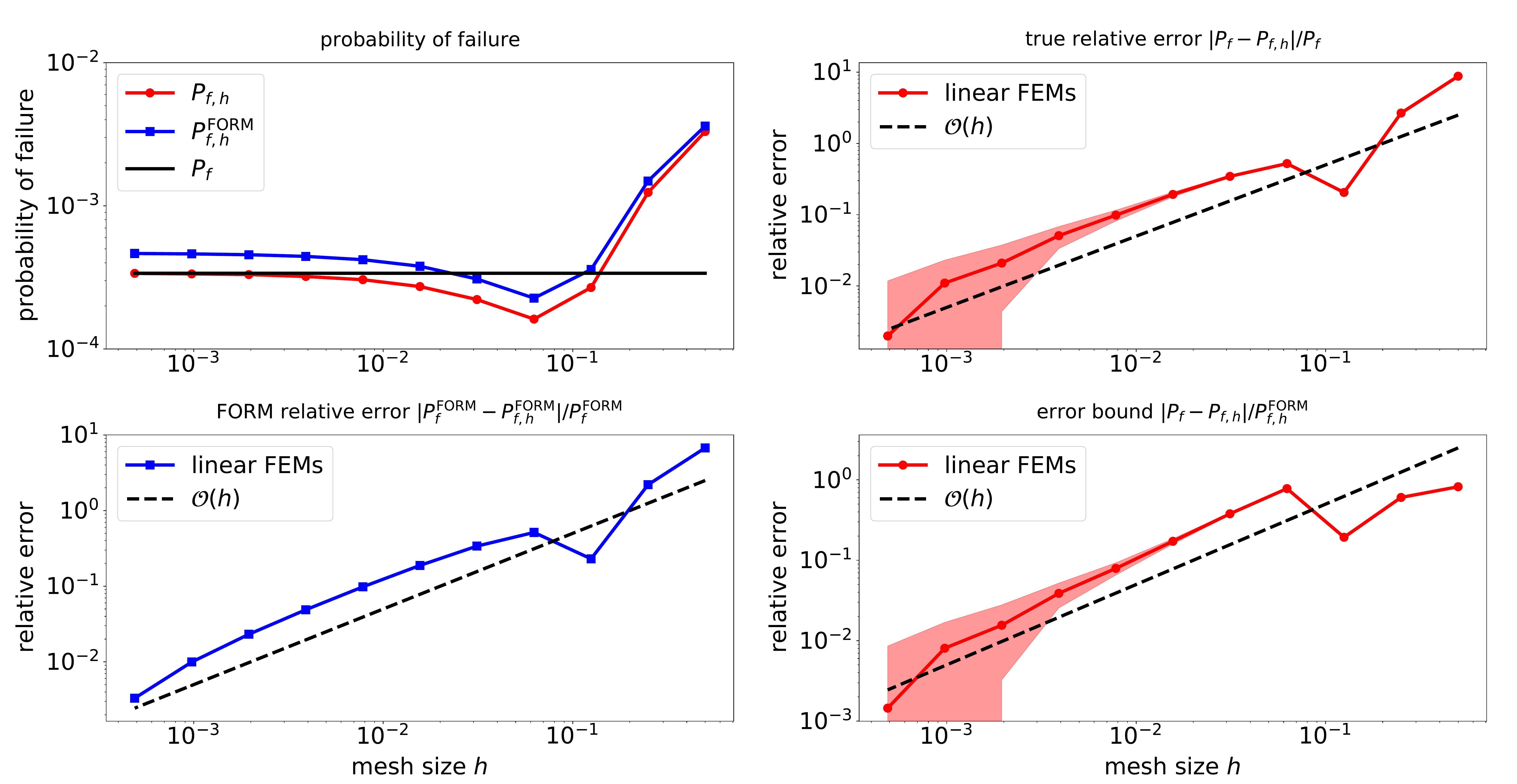}
		\caption{Upper left: reference probability of failure $P_f$, approximations $P_{f,h}$ and FORM estimates $P_{f,h}^{\mathrm{FORM}}$. Upper right: relative error of the approximations $P_{f,h}$ with respect to the reference probability $P_f$. Lower left: relative error of the approximate FORM estimate $P_{f,h}^{\mathrm{FORM}}$ with respect to the reference $P_f^{\mathrm{FORM}}$. Lower right: derived bound of the error $\vert P_f - P_{f,h}\vert$. The dashed black lines show the order of convergence. The red areas show the standard deviations of the estimates.}
		\label{1D diffusion}
\end{figure}

\subsubsection{50-dimensional parameter space}
We consider the problem setting of~\eqref{ex 1D diffusion} with correlation length $\lambda=0.1$. A smaller correlation length requires a larger number of leading KLE terms to acquire a similar resolution of the random field. Therefore, we consider $n=50$ leading KLE terms, which captures around $96\%$ of the variability of the random field. We adjust the threshold $q_{\mathrm{max}}=1.5$ to achieve a similar order of the probability of failure. As in the previous example, the references for the probability of failure are obtained by SIS and the settings as described above. The reference probability of failure is estimated as $P_f = 7.18\cdot 10^{-5}$ on a mesh with discretization size $h=2^{-12}$. The reference FORM estimate $P_f^{\mathrm{FORM}}=1.52\cdot 10^{-4}$ is obtained on the same discretization level. The reference $P_{f,h}$ and $P_{f,h}^{\mathrm{FORM}}$ are obtained on a sequence of mesh sizes $h_{\ell}=1/2^{\ell}$ for $\ell=1,\dots,11$.
\\The upper left plot of Figure~\ref{1D diffusion high dim} shows that $P_{f,h}^{\mathrm{FORM}}$ is always larger than $P_{f,h}$ for a fixed mesh size $h$. The upper right plot shows that the relative error $\vert P_f-P_{f,h}\vert/P_f$ has order $\mathcal{O}(h)$ of convergence for small discretization sizes $h$. For large $h$, we observe a plateau behaviour and then a fast decay until it converges with the expected order. 
\\The relative error with respect to the FORM estimates, which is illustrated in the lower left plot, has order $\mathcal{O}(h)$ of convergence and, hence, is the same as the convergence property of the LSF. Moreover, the lower right plot shows the convergence of our error bound. We also observe that the error bound gives an order one approximation for small $h$. This is exactly the order of convergence we expect from Proposition~\ref{proposition} and Theorem~\ref{Thm error bound}.

\begin{figure}[htbp]
\centering
	\includegraphics[trim=0cm 0cm 0cm 0cm,scale=0.23]{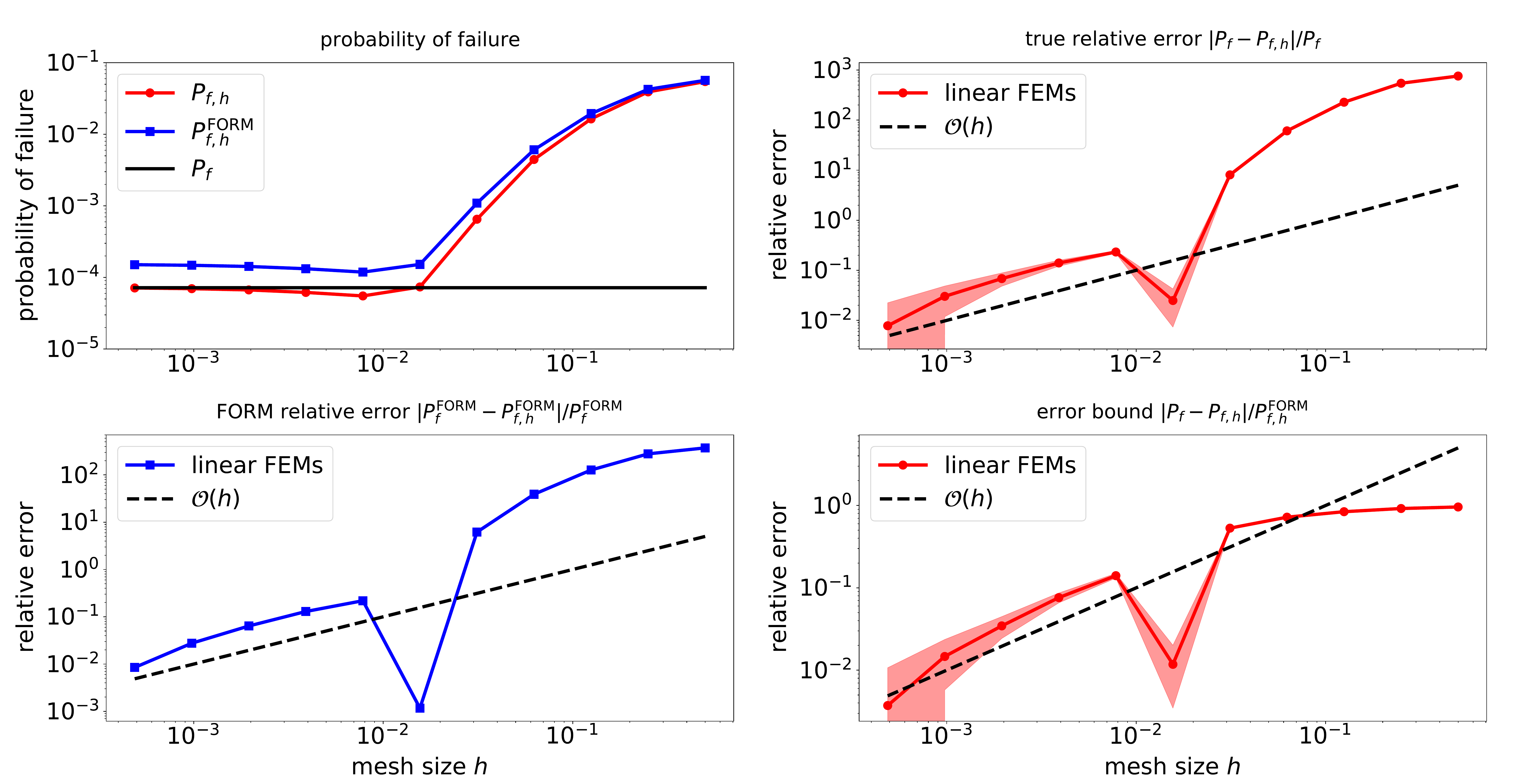}
		\caption{Upper left: reference probability of failure $P_f$, approximations $P_{f,h}$ and FORM estimates $P_{f,h}^{\mathrm{FORM}}$. Upper right: relative error of the approximations $P_{f,h}$ with respect to the reference probability $P_f$. Lower left: relative error of the approximate FORM estimate $P_{f,h}^{\mathrm{FORM}}$ with respect to the reference $P_f^{\mathrm{FORM}}$. Lower right: derived bound of the error $\vert P_f - P_{f,h}\vert$. The dashed black lines show the order of convergence. The red areas show the standard deviations of the estimates.}
		\label{1D diffusion high dim}
\end{figure}

\section{Conclusion and Outlook}\label{Section Conclusion}
In this manuscript, we have considered the approximation error of the probability of failure, which is induced through the approximation error of the LSF. We assume that the LSF depends on the evaluation of an elliptic PDE with stochastic diffusion parameter and Dirichlet boundary condition. We have shown in Theorem~\ref{Thm error bound} under certain assumptions, that the approximation error of the probability of failure behaves as the PDE discretization error multiplied by the FORM estimate of the probability of failure. Moreover, we have shown in Proposition~\ref{proposition} that the relative error of the FORM estimates behaves as the PDE discretization error. If the LSF is affine linear with respect to the stochastic parameter, the derived error bound gives an upper bound for the relative approximation error of the probability of failure. 
\\Our provided error bounds are only applicable for uniformly elliptic and bounded diffusion coefficients. We outline an idea to treat pathwise elliptic and bounded diffusion coefficients. However, we have not provided a complete proof. In several numerical experiments, we observe that our provided error bounds also hold true for pathwise elliptic and bounded diffusion coefficients. In these experiments, we have shown that the approximation error of the probability of failure indeed behaves as the derived error bound given in Theorem~\ref{Thm error bound}. The same holds true for the bound of the relative error of the FORM estimates given in Proposition~\ref{proposition}.
\\The manuscript can be used as a starting point to derive an error bound, which is applicable for a broader range of LSFs. The derivation of an error bound for the relative error, which does not consist of the FORM estimate, is still of high interest.

\section*{Acknowledgments}
We would like to acknowledge the insightful discussion with Daniel Walter about a-priori error estimates for optimal control.

\bibliographystyle{plain}     
\bibliography{literatur}   

\begin{thebibliography}{10}

\bibitem{Abramowitz64}
{\sc M.~Abramowitz and I.~A. Stegun}, {\em Handbook of Mathematical Functions
  with Formulas, Graphs, and Mathematical Tables}, U.S. Department of Commerce,
  National Bureau of Standards, 1964.

\bibitem{Borwein06}
{\sc J.~M. Borwein and A.~S. Lewis}, {\em Convex Analysis and Nonlinear
  Optimization, Theory and Examples}, Springer, New York, NY, 2~ed., 2006,
  \url{https://doi.org/10.1007/978-0-387-31256-9}.

\bibitem{Braess07}
{\sc D.~Braess}, {\em Finite Elements: Theory, Fast Solvers, and Applications
  in Solid Mechanics}, Cambridge University Press, 3~ed., 2007,
  \url{https://doi.org/10.1017/CBO9780511618635}.

\bibitem{Scheichl13}
{\sc J.~Charrier, R.~Scheichl, and A.~L. Teckentrup}, {\em Finite element error
  analysis of elliptic {PDEs} with random coefficients and its application to
  multilevel {Monte Carlo} methods}, SIAM Journal on Numerical Analysis, 51
  (2013), pp.~322--352, \url{https://doi.org/10.1137/110853054}.

\bibitem{Cornaton08}
{\sc F.~J. Cornaton, Y.~Park, S.~D. Normani, E.~A. Sudicky, and J.~F. Sykes},
  {\em Use of groundwater lifetime expectancy for the performance assessment of
  a deep geologic waste repository}, Water Resources Research, 44 (2008),
  \url{https://doi.org/10.1029/2007WR006208}.

\bibitem{Kiureghian86}
{\sc A.~Der~Kiureghian and P.-L. Liu}, {\em Structural reliability under
  incomplete probability information}, Journal of Engineering Mechanics, 112
  (1986), pp.~85--104,
  \url{https://doi.org/10.1061/(ASCE)0733-9399(1986)112:1(85)}.

\bibitem{Douglas75}
{\sc J.~Douglas, T.~Dupont, and L.~Wahlbin}, {\em Optimal {$L_{\infty}$} error
  estimates for {Galerkin} approximations to solutions of two-point boundary
  value problems}, Mathematics of Computation, 29 (1975), pp.~475--483,
  \url{https://doi.org/10.1090/S0025-5718-1975-0371077-0}.

\bibitem{Elfverson16}
{\sc D.~Elfverson, F.~Hellman, and A.~M\r{a}lqvist}, {\em A multilevel {Monte
  Carlo} method for computing failure probabilities}, SIAM/ASA Journal on
  Uncertainty Quantification, 4 (2016), pp.~312--330,
  \url{https://doi.org/10.1137/140984294}.

\bibitem{Ernst15}
{\sc O.~G. Ernst, B.~Sprungk, and H.-J. Starkloff}, {\em Analysis of the
  ensemble and polynomial chaos {Kalman} filters in {Bayesian} inverse
  problems}, SIAM/ASA Journal on Uncertainty Quantification, 3 (2015),
  pp.~823--851, \url{https://doi.org/10.1137/140981319}.

\bibitem{evans10}
{\sc L.~C. Evans}, {\em Partial Differential Equations}, American Mathematical
  Society, 2~ed., 2010.

\bibitem{Eymard00}
{\sc R.~Eymard, T.~Gallou{\"e}t, and R.~Herbin}, {\em Finite volume methods},
  in Solution of Equation in $\mathbb{R}^n$ (Part 3), Techniques of Scientific
  Computing (Part 3), vol.~7 of Handbook of Numerical Analysis, Elsevier, 2000,
  pp.~713--1018, \url{https://doi.org/10.1016/S1570-8659(00)07005-8}.

\bibitem{Fenton03}
{\sc G.~A. Fenton and D.~V. Griffiths}, {\em Bearing-capacity prediction of
  spatially random $c-\phi$ soils}, Canadian Geotechnical Journal, 40 (2003),
  pp.~54--65, \url{https://doi.org/10.1139/t02-086}.

\bibitem{Garbuno20}
{\sc A.~Garbuno-Inigo, F.~Hoffmann, W.~Li, and A.~M. Stuart}, {\em Interacting
  langevin diffusions: Gradient structure and ensemble {Kalman} sampler}, SIAM
  Journal on Applied Dynamical Systems, 19 (2020), pp.~412--441,
  \url{https://doi.org/10.1137/19M1251655}.

\bibitem{Ghanem91}
{\sc R.~Ghanem and P.~Spanos}, {\em Stochastic Finite Elements: A Spectral
  Approach}, Springer\textendash Verlag, New York, 1991,
  \url{https://doi.org/10.1007/978-1-4612-3094-6}.

\bibitem{Gordon1941}
{\sc R.~D. Gordon}, {\em Values of {Mills\textsc{\char13}} ratio of area to
  bounding ordinate and of the normal probability integral for large values of
  the argument}, Ann. Math. Statist., 12 (1941), pp.~364--366,
  \url{https://doi.org/10.1214/aoms/1177731721}.

\bibitem{Hackbusch17}
{\sc W.~Hackbusch}, {\em Elliptic Differential Equations}, Springer Series in
  Computational Mathematics, Springer, Berlin, Heidelberg, 2~ed., 2017,
  \url{https://doi.org/10.1007/978-3-662-54961-2}.

\bibitem{Hasofer74}
{\sc A.~Hasofer and N.~Lind}, {\em An exact and invariant first order
  reliability format}, Journal of Engineering Mechanics, 100 (1974),
  pp.~111--121.

\bibitem{Hohenbichler81}
{\sc M.~Hohenbichler and R.~Rackwitz}, {\em Non-normal dependent vectors in
  structural safety}, Journal of the Engineering Mechanics Division, 107
  (1981), pp.~1227--1238.

\bibitem{Kiureghian04}
{\sc A.~Kiureghian}, {\em First- and second-order reliability methods}, in
  Engineering Design Reliability Handbook, E.~Nikolaidis, D.~M. Ghiocel, and
  S.~Singhal, eds., CRC Press, 2004, ch.~14.

\bibitem{LeVeque07}
{\sc R.~J. LeVeque}, {\em Finite Difference Methods for Ordinary and Partial
  Differential Equations}, Society for Industrial and Applied Mathematics,
  2007, \url{https://doi.org/10.1137/1.9780898717839}.

\bibitem{Lord14}
{\sc G.~J. Lord, C.~E. Powell, and T.~Shardlow}, {\em An Introduction to
  Computational Stochastic {PDEs}}, Cambridge Texts in Applied Mathematics,
  Cambridge University Press, 2014,
  \url{https://doi.org/10.1017/CBO9781139017329}.

\bibitem{Noseck08}
{\sc U.~Noseck, D.~Becker, C.~Fahrenholz, E.~Fein, J.~Fl{\"u}gge, K.-P.
  Kr{\"o}hn, J.~M{\"o}nig, I.~M{\"u}ller-Lyda, T.~Rothfuchs, A.~R{\"u}bel, and
  J.~Wolf}, {\em Assessment of the long-term safety of repositories},
  Gesellschaft f{\"u}r Anlage und Reaktorsicherheit (GRS) mbH, 2008,
  \url{https://www.grs.de/sites/default/files/pdf/GRS-237.pdf}.

\bibitem{Nist10}
{\sc F.~W. Olver, D.~W. Lozier, R.~F. Boisvert, and C.~W. Clark}, {\em NIST
  Handbook of Mathematical Functions}, Cambridge University Press, USA, 1~ed.,
  2010.

\bibitem{Papaioannou16}
{\sc I.~Papaioannou, C.~Papadimitriou, and D.~Straub}, {\em Sequential
  importance sampling for structural reliability analysis}, Structural Safety,
  62 (2016), pp.~66--75, \url{https://doi.org/10.1016/j.strusafe.2016.06.002}.

\bibitem{Peherstorfer18}
{\sc B.~Peherstorfer, B.~Kramer, and K.~Willcox}, {\em Multifidelity
  preconditioning of the cross-entropy method for rare event simulation and
  failure probability estimation}, SIAM/ASA Journal on Uncertainty
  Quantification, 6 (2018), pp.~737--761,
  \url{https://doi.org/10.1137/17M1122992}.

\bibitem{Rannacher05}
{\sc R.~Rannacher and B.~Vexler}, {\em A priori error estimates for the finite
  element discretization of elliptic parameter identification problems with
  pointwise measurements}, SIAM Journal on Control and Optimization, 44 (2005),
  pp.~1844--1863, \url{https://doi.org/10.1137/040611100}.

\bibitem{Rasmussen06}
{\sc C.~Rasmussen and C.~Williams}, {\em {Gaussian} Processes for Machine
  Learning}, Adaptive Computation and Machine Learning, MIT Press, 1~ed., 2006.

\bibitem{Strang97}
{\sc G.~Strang and G.~J. Fix}, {\em An Analysis of the Finite Element Method},
  Wellesley-Cambridge Press, 2~ed., 1997.

\bibitem{Straub16}
{\sc D.~Straub, I.~Papaioannou, and W.~Betz}, {\em {Bayesian} analysis of rare
  events}, Journal of Computational Physics, 314 (2016), pp.~538--556,
  \url{https://doi.org/10.1016/j.jcp.2016.03.018}.

\bibitem{Teckentrup2013}
{\sc A.~L. Teckentrup, R.~Scheichl, M.~B. Giles, and E.~Ullmann}, {\em Further
  analysis of multilevel {Monte Carlo} methods for elliptic {PDEs} with random
  coefficients}, Numerische Mathematik, 125 (2013), pp.~569--600,
  \url{https://doi.org/10.1007/s00211-013-0546-4}.

\bibitem{Ullmann15}
{\sc E.~Ullmann and I.~Papaioannou}, {\em Multilevel estimation of rare
  events}, SIAM/ASA Journal on Uncertainty Quantification, 3 (2015),
  pp.~922--953, \url{https://doi.org/10.1137/140992953}.

\bibitem{Vexler04}
{\sc B.~Vexler}, {\em Adaptive Finite Element Methods for Parameter
  Identification Problems}, {PhD} thesis, University of Heidelberg, 2004,
  \url{https://doi.org/10.11588/heidok.00004603}.

\bibitem{Wagner20}
{\sc F.~Wagner, J.~Latz, I.~Papaioannou, and E.~Ullmann}, {\em Multilevel
  sequential importance sampling for rare event estimation}, SIAM Journal on
  Scientific Computing, 42 (2020), pp.~A2062--A2087,
  \url{https://doi.org/10.1137/19M1289601}.

\end{thebibliography}

\end{document}